  \crefname{section}{Section}{Sections}
  \crefname{figure}{Figure}{Figures}
  \crefname{theorem}{Theorem}{Theorems}
  \crefname{lemma}{Lemma}{Lemmas}
  \crefname{proposition}{Proposition}{Propositions}  
  \crefname{corollary}{Corollary}{Corollaries}
  \crefname{definition}{Definition}{Definitions}
  \crefname{example}{Example}{Examples}
  \crefname{remark}{Remark}{Remarks}
\newtheorem{theorem}{Theorem}[section]
\newtheorem{lemma}[theorem]{Lemma}
\newtheorem{proposition}[theorem]{Proposition}
\newtheorem{corollary}[theorem]{Corollary}
\theoremstyle{definition}
\theoremstyle{remark}
\newtheorem{remark}[theorem]{Remark}
\DeclarePairedDelimiter\abs{\lvert}{\rvert}
\DeclarePairedDelimiter\norm{\lVert}{\rVert}
\newcommand*{\defeq}{\mathrel{\mathop:}=}
\newcommand*{\eqdef}{=\mathrel{\mathop:}}
\renewcommand*{\Re}{\operatorname{Re}}
\renewcommand*{\Im}{\operatorname{Im}}
\newcommand*{\diam}{\operatorname{diam}}
\newcommand*{\dist}{\operatorname{dist}}
\newcommand*{\hcap}{\operatorname{hcap}}
\renewcommand*{\fill}{\operatorname{fill}}
\newcommand*{\ex}{\mathbb E}
\newcommand*{\NN}{\mathbb N}
\newcommand*{\RR}{\mathbb R}
\newcommand*{\CC}{\mathbb C}
\newcommand*{\hatC}{\hat\CC}
\newcommand*{\HH}{\mathbb H}
\newcommand*{\barH}{\overline\HH}
\newcommand*{\DD}{\mathbb D}
\newcommand*{\slek}{SLE$_\kappa$}
\title{Topological characterisations of Loewner traces}
\author{Yizheng Yuan\footnote{TU Berlin, Germany. \texttt{yuan@math.tu-berlin.de}}}
\begin{document}

\maketitle

\begin{abstract}
The (chordal) Loewner differential equation encodes certain curves in the half-plane (aka traces) by continuous real-valued driving functions. Not all curves are traces; the latter can be defined via a geometric condition called the local growth property. In this paper we give two other equivalent conditions that characterise traces: 1. A continuous curve is a trace if and only if mapping out any initial segment preserves its continuity (which can be seen as an analogue of the domain Markov property of SLE). 2. The (not necessarily simple) traces are exactly the uniform limits of simple traces. Moreover, using methods by Lind, Marshall, Rohde (2010), we infer that uniform convergence of traces imply uniform convergence of their driving functions.
\end{abstract}

\section{Introduction and main results}

Loewner chains provide a way to encode certain curves in a planar domain by real-valued functions called driving functions or Loewner transforms. They had been originally introduced by K. Löwner (1923) as an approach to solve the Bieberbach conjecture, but have recently also been used by O. Schramm (2000) to construct Schramm-Loewner evolution (SLE) which is a random curve driven by a multiple of Brownian motion. The relation between the driving function and the corresponding curve (called trace) is quite involved. In particular, not all curves are traces, but only those that satisfy a geometric condition called the local growth property. (Conversely, not all driving functions do generate a trace either, and there is so far no known characterisation of such driving functions.)

Particularly nice Loewner traces are the so-called simple traces which do neither intersect themselves nor the boundary of the domain. But already SLE produce (for some parameters) examples of non-simple traces. Therefore there is motivation to study the space of (not necessarily simple) Loewner traces. In the following, we will consider chordal Loewner traces in the upper half-plane $\HH$. In \cite{TY20} the authors have shown that uniform limits of simple traces provide a (in general not simple) trace again, and they have raised the question whether the converse is true, i.e. whether any trace can be approximated by simple traces. (For \slek{} this has been known from \cite{LSW04,Tran15}.) We show in the present paper that this is indeed the case.

Another motivation for studying the space of Loewner traces is characterising the topological support of \slek{} (as a probability measure on the path space). In \cite{TY20} the authors have shown that the support of \slek{} is the closure of the set of simple traces. The result in the present paper implies that this is already the entire space of Loewner traces.

The main result of the present paper is the following characterisation of chordal Loewner traces. See \cref{sec:preliminaries} for definitions of the terminology.
\begin{theorem}\label{th:trace_characterisation}
Let $\gamma\colon {[0,\infty[} \to \barH$ be a continuous path with $\gamma(0) \in \RR$ such that the family of $K_t \defeq \fill(\gamma[0,t])$ is strictly increasing. Then the following are equivalent:
\begin{enumerate}[label=(\roman*)]
\item\label{it:lgp} The family $(K_t)_{t \ge 0}$ satisfies the local growth property.
\item\label{it:markov} For every $t \ge 0$, the path $\gamma_t(s) \defeq g_t(\gamma(s))$, $s \ge t$, is continuous.
\item\label{it:limit} There exists a sequence of simple paths $\gamma^n\colon {[0,\infty[} \to \barH$ with $\gamma^n(0) \in \RR$ and $\gamma^n(]0,\infty[) \subseteq \HH$ such that $\gamma^n \to \gamma$ locally uniformly.
\end{enumerate}
\end{theorem}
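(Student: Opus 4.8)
The plan is to prove a cyclic chain of implications, $\ref{it:lgp}\Rightarrow\ref{it:markov}\Rightarrow\ref{it:limit}\Rightarrow\ref{it:lgp}$, exploiting the fact that $\ref{it:limit}\Rightarrow\ref{it:lgp}$ is essentially the result of \cite{TY20} cited in the introduction, so the two genuinely new arrows are $\ref{it:lgp}\Rightarrow\ref{it:markov}$ and $\ref{it:markov}\Rightarrow\ref{it:limit}$.

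For $\ref{it:lgp}\Rightarrow\ref{it:markov}$: the local growth property says that for each $s$, as $\varepsilon\downarrow 0$, the conformal radius of $g_s(K_{s+\varepsilon}\setminus K_s)$ seen from above tends to $0$ uniformly in $s$ on compacts; standard Loewner theory then gives a continuous driving function $W$ with $\partial_t g_t(z)=\tfrac{2}{g_t(z)-W_t}$, and the key point is that $W_t=\lim_{s\downarrow t}g_t(\gamma(s))$ exists and $\gamma(t)$ is the unique prime end swallowed at time $t$. To get continuity of $\gamma_t(s)=g_t(\gamma(s))$ for $s\ge t$, I would first note $\gamma_t(t+r)=\tilde g_r(\gamma(t+r))$ where $\tilde g_r$ is the Loewner map of the shifted chain $K_{t+r}'=g_t(K_{t+r})$ driven by $W_{t+\cdot}$; since $(K_{t+r})_{r\ge0}$ inherits the local growth property and $g_t$ is conformal on a neighbourhood of every point of $\gamma[t+\delta,\infty[$ for $\delta>0$, the path $r\mapsto\gamma_t(t+r)$ is continuous on $]0,\infty[$, so only continuity (including right-continuity) at $r=0$ is at issue, i.e. $g_t(\gamma(s))\to\gamma(t)$... more precisely $\to W_t$ as $s\downarrow t$. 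This is exactly the statement that the trace is generated continuously, which is built into the local growth property via the half-plane capacity estimate $\diam(g_t(K_{t+\varepsilon}\setminus K_t))\to0$; I would cite the relevant lemma from \cref{sec:preliminaries}.

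For $\ref{it:markov}\Rightarrow\ref{it:limit}$ — which I expect to be the main obstacle — the idea is to build the approximating simple curves by a surgery/perturbation on $\gamma$ that resolves all self-intersections and boundary touches while controlling the sup-distance. The cleanest route I see is to use \ref{it:markov} to run an inductive construction over dyadic time scales: on $[0,2^{-n}]$ replace $\gamma$ by a short simple arc from $\gamma(0)$ staying in $\HH$ and ending near $\gamma(2^{-n})$; then, using that $\gamma_{2^{-n}}=g_{2^{-n}}\circ\gamma$ is again a continuous path to which the hypothesis applies (the Markov-type property is stable under mapping out, since $g_{s}\circ g_{2^{-n}}^{-1}$ agrees with the mapping-out maps of the shifted chain), recurse on the image and pull back through $g_{2^{-n}}^{-1}$. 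The pull-back is conformal on $\HH$ and extends continuously to the relevant boundary arcs, so a simple curve upstairs maps to a simple curve; the diameter distortion of $g_{2^{-n}}^{-1}$ near the real line is controlled by half-plane-capacity/Koebe estimates, giving the locally uniform convergence as $n\to\infty$. A technically delicate point is ensuring the spliced pieces join up into a single curve that is \emph{globally} simple (not merely simple on each piece) and stays in the open half-plane for positive times; here one uses that mapping out strictly increasing hulls separates the pieces, so disjointness upstairs at each scale is preserved downstairs.

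Finally, for $\ref{it:limit}\Rightarrow\ref{it:lgp}$ I would invoke the theorem of \cite{TY20}: a locally uniform limit of simple traces is a trace, i.e. its filled hulls satisfy the local growth property. Alternatively this can be reproven directly via the Carathéodory-kernel theorem together with the uniform half-plane-capacity parametrisation, but since the statement is quoted in the introduction I would simply cite it. Assembling the three arrows closes the cycle and proves the equivalence; the last sentence of the abstract (uniform convergence of traces implies uniform convergence of driving functions) would then follow from the Lind–Marshall–Rohde estimates applied along the approximating sequence, but that is outside the scope of \cref{th:trace_characterisation} itself.
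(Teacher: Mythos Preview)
Your proposal has a genuine gap in the implication \ref{it:lgp}$\Rightarrow$\ref{it:markov}, and it is precisely the gap that the bulk of the paper (all of \cref{se:trace_excursions}) is devoted to closing.

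You assert that $g_t$ is conformal on a neighbourhood of every point of $\gamma([t+\delta,\infty[)$ for $\delta>0$, and conclude that $r\mapsto\gamma_t(t+r)$ is continuous on $]0,\infty[$, leaving only $r=0$ to check. This is false for non-simple traces: whenever $\gamma(s)\in\partial K_t$ for some $s>t$ (the trace touching its own past, or touching $\RR$), the map $g_t$ is not defined at $\gamma(s)$, let alone conformal nearby, and $\gamma(s)$ may correspond to several prime ends of $\HH\setminus K_t$ (see the remark immediately after the theorem). The situation is exactly the reverse of what you claim: right-continuity at every $s\ge t$ is cheap (local connectedness of $K_s$ gives it), but \emph{left}-continuity of $\gamma_t$ at times where it hits $\RR$ is the hard point. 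The paper handles this by analysing the excursion structure of $\gamma_t$ away from $\RR$: one first shows there are only finitely many excursions of a given size on compact time intervals, then introduces the notion of ``$h$-sides'' of a hull and proves, via a sequence of separation lemmas (\cref{thm:cut_off_sides}--\cref{thm:side_not_crossed}) using Janiszewski's theorem and the local growth property, that $\beta=\gamma_t$ cannot jump between sides. This is the main technical content and cannot be replaced by the argument you give. Note also that local connectedness of all $K_{t,s}$ alone is insufficient, as \cref{fig:trace_not_continuous} shows.

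Two further remarks. First, the paper's logical structure is not the cycle you propose: it proves \ref{it:markov}$\Rightarrow$\ref{it:lgp} directly (via \cite[Lemma~4.5]{LMR10}) and then \ref{it:lgp}$\Rightarrow$\ref{it:limit} \emph{using} the already-established \ref{it:markov}; your route \ref{it:markov}$\Rightarrow$\ref{it:limit} is in the same spirit but would require its own work. Second, your sketch for \ref{it:markov}$\Rightarrow$\ref{it:limit} (dyadic recursion, replacing short initial segments) is different from the paper's construction, which instead inserts short vertical ``cut segments'' at a countable dense set of times chosen so that $\gamma$ is off its past there, and carefully controls that the separations persist through later insertions (\cref{thm:small_distance_perturbation}); your version might be made to work, but as written it does not address how global simplicity is maintained after pulling back infinitely many pieces.
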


We point out that we identify $K_t$ by their intersection with $\HH$ (see \cref{sec:preliminaries}), for instance $\gamma \subseteq \RR$ are not counted as strictly increasing.

\begin{remark}
To be very precise, a boundary point $z \in \partial K_t$ can belong to several prime ends of $\HH \setminus K_t$, so the image $g_t(z)$ would not be unique. Therefore the precise formulation of \ref{it:markov} is that $\gamma_t$ can be chosen to be continuous (in case $\gamma(s) \in \partial K_t$ for some $s > t$).
\end{remark}

While all of the above properties seem natural, proving their equivalence requires some work. One should keep in mind that Loewner traces might have infinitely many self-intersections and be space-filling (e.g. \slek{} with $\kappa \ge 8$). This makes none of the equivalences obvious. (More examples of space-filling curves can be found e.g. in \cite{LR12}.)

The property \ref{it:markov} can be seen as a deterministic analogue of the domain Markov property of SLE which O. Schramm defined \cite{Sch00} (i.e. conditioned on an initial segment of the \slek{} trace $\gamma[0,t]$ (in the domain $\HH$), the remaining part of the trace $\gamma[t,\infty]$ is again an \slek{} trace in the domain $\HH \setminus \fill(\gamma[0,t])$). Analogously, the property \ref{it:markov} describes that for any $t$ we have that $\gamma[t,\infty]$, mapped from the domain $\HH \setminus \fill(\gamma[0,t])$ to $\HH$, becomes again a continuous curve.

The property \ref{it:limit} could remind us of \slek{} which are (for some values of $\kappa$) limits of simple curves arising from certain discrete models (e.g. \cite{Smi01,LSW04}). We emphasise that this property is not trivial to show, either. The ``obvious'' attempt to construct an approximating sequence $(\gamma^n)$ would be smoothening the driving function of $\gamma$, but it is not clear whether the produced traces converge uniformly (they only converge in the Carathéodory sense, see \cite[Section 4.7]{Law05}).

Another way of viewing \cref{th:trace_characterisation} is that intuitively Loewner traces are allowed to self-intersect but need to ``bounce-off'' instead of ``crossing over''. But especially when the trace is space-filling, it is not obvious what this means precisely. This theorem describes three equivalent ways of phrasing it.

A consequence of the property \ref{it:markov} is that if we call $\xi$ the driving function of $\gamma$, then $\gamma_t$ is the continuous trace driven by the restriction $\xi_t \defeq \xi\big|_{[t,\infty[}$. To see this, observe that the family of $K_{t,s} \defeq g_t(K_s\setminus K_t)$, $s \ge t$, is the Loewner chain driven by $\xi_t$. It is then easy to see that for each $s \ge t$, we have $\HH \setminus K_{t,s}$ is the unbounded connected component of $\HH \setminus \gamma_t[t,s]$.

In particular, the (pathwise) property of a driving function to generate a continuous trace is a local property.
\begin{corollary}\label{co:driver_restriction}
Suppose $\xi \in C([0,\infty[;\RR)$ generates a trace. Then for any $t \ge 0$, the driving function $\xi_t \defeq \xi\big|_{[t,\infty[} \in C([t,\infty[;\RR)$ generates a trace, namely $\gamma_t$.
\end{corollary}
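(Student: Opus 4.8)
The plan is to deduce the corollary directly from the equivalence \ref{it:lgp} $\Leftrightarrow$ \ref{it:markov} in \cref{th:trace_characterisation} together with the discussion preceding the statement. First I would fix a trace $\gamma$ driven by $\xi$, and apply \cref{th:trace_characterisation} to $\gamma$: by hypothesis $\xi$ generates $\gamma$, which means precisely that $(K_t)_{t\ge 0}$ with $K_t = \fill(\gamma[0,t])$ satisfies the local growth property, so \ref{it:lgp} holds, hence \ref{it:markov} holds, i.e. for the chosen $t$ the path $\gamma_t(s) = g_t(\gamma(s))$, $s\ge t$, is continuous. The point is now to identify $\gamma_t$ as the trace driven by $\xi_t$.

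For this I would invoke the semigroup/Loewner-flow computation already sketched in the excerpt: the family $K_{t,s} \defeq g_t(K_s \setminus K_t)$, $s \ge t$, is exactly the Loewner chain with driving function $\xi_t$ (this is the standard composition rule $g_s = g_{t,s}\circ g_t$ for Loewner maps, where $g_{t,s}$ denotes the map from $\HH\setminus K_{t,s}$ normalised at infinity, together with the fact that the driving function of $(K_{t,s})_{s\ge t}$ is $s\mapsto \xi(s)$). It therefore remains to check that this chain $(K_{t,s})_{s\ge t}$ is generated by the continuous curve $\gamma_t$ in the sense of traces, i.e. that for each $s\ge t$ the set $\HH\setminus K_{t,s}$ is the unbounded connected component of $\HH\setminus\gamma_t[t,s]$. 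But $g_t$ is a conformal map from the unbounded component of $\HH\setminus\gamma[0,t]$ onto $\HH$, and it is a general fact (Carathéodory / prime-end theory, noting that $g_t$ extends continuously to the relevant boundary) that a conformal homeomorphism sends the unbounded component of the complement of a compact connected set to the unbounded component of the complement of its image; applying this with the compact connected set $\gamma[t,s]$ (whose image under $g_t$ is $\gamma_t[t,s]$ by definition of $\gamma_t$, modulo the prime-end subtlety flagged in the Remark) gives the claim. Since $\gamma_t$ is continuous and generates $(K_{t,s})_{s\ge t}$, it is by definition the trace driven by $\xi_t$; in particular $\xi_t$ generates a trace.

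The main obstacle I anticipate is the prime-end bookkeeping: $\gamma(s)$ for $s>t$ may lie on $\partial K_t$, so $g_t(\gamma(s))$ is a priori ambiguous, and one has to argue (as in the Remark) that a continuous choice exists and that this continuous choice is the one making the identification $\HH\setminus K_{t,s} = $ (unbounded component of $\HH\setminus\gamma_t[t,s]$) hold for \emph{all} $s$ simultaneously. Concretely, continuity of $\gamma_t$ forces the boundary values at the ambiguous times to be determined as limits from times $s'\downarrow s$ or $s'\uparrow s$ where $\gamma(s')\in\HH$, and one checks this is consistent with the trace-generation requirement. Everything else — the composition rule for Loewner chains and the invariance of the unbounded complementary component under conformal maps — is routine and can be cited from, e.g., \cite{Law05}. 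Finally, the last sentence of the corollary's context (``local property'') is just the observation that the same argument applies after any time shift and reparametrisation, so I would state it as an immediate remark rather than prove it separately.
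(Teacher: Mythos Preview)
Your proposal is correct and matches the paper's own argument essentially line for line: the paper derives \cref{co:driver_restriction} in the paragraph immediately preceding it by invoking property \ref{it:markov} of \cref{th:trace_characterisation}, noting that $(K_{t,s})_{s\ge t}$ is the Loewner chain driven by $\xi_t$, and then observing that $\HH\setminus K_{t,s}$ is the unbounded component of $\HH\setminus\gamma_t[t,s]$. Your additional remarks on the prime-end bookkeeping simply flesh out what the paper calls ``easy to see'' and flags in its Remark.
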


Again, this statement might ``feel'' obvious to the expert but requires some work to prove. Indeed, D. Zhan has noticed that this statement is not obvious especially for traces with infinitely many self-intersections. The proof would considerably simplify if one only needed to prove that all corresponding hulls are locally connected. But in a discussion with S. Rohde, D. Belyaev noticed that this does not necessarily imply trace continuity, see a counterexample in \cref{fig:trace_not_continuous}.

\begin{remark}\label{re:hcap_need}
In the formulation of \cref{th:trace_characterisation}, there is no need to require the trace to be parametrised by half-plane capacity since the properties do not depend on the parametrisation anyway. But keep in mind that the correspondence between trace and driving function, as in the formulation of \cref{co:driver_restriction}, is defined via half-plane capacity parametrisation (see \cref{sec:preliminaries} for details).

In case $(K_t)$ in \cref{th:trace_characterisation} is parametrised by half-plane capacity, then we can choose $\gamma^n$ parametrised by half-plane capacity as well (since reparametrising does not break the convergence, cf. \cite[Proposition 6.4]{TY20}).
\end{remark}

Another consequence of the property \ref{it:limit} is the following.
\begin{corollary}
The set of chordal Loewner traces parametrised by half-plane capacity is a closed subset of $C([0,\infty[;\barH)$ (with compact-open topology).
\end{corollary}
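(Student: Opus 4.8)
The plan is to show that the set in question is sequentially closed; since the compact-open topology on $C([0,\infty[;\barH)$ coincides with locally uniform convergence and is induced by the metric $d(\alpha,\beta) \defeq \sum_{k \ge 1} 2^{-k} \min\{1, \sup_{[0,k]} \abs{\alpha - \beta}\}$, this suffices. So let $(\gamma^n)_{n \in \NN}$ be traces parametrised by half-plane capacity with $\gamma^n \to \gamma$ locally uniformly; we must show that $\gamma$ is again a trace parametrised by half-plane capacity.

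Each $\gamma^n$ satisfies the hypotheses of \cref{th:trace_characterisation}: it is continuous with $\gamma^n(0) \in \RR$, and since it is parametrised by half-plane capacity the function $t \mapsto \hcap \fill(\gamma^n[0,t]) = 2t$ is strictly increasing, so the family $(\fill(\gamma^n[0,t]))_{t \ge 0}$ is strictly increasing. Being a trace, $\gamma^n$ has property \ref{it:lgp}, so by \ref{it:limit} together with \cref{re:hcap_need} there are simple paths $\gamma^{n,m}$ ($m \in \NN$), parametrised by half-plane capacity, with $\gamma^{n,m}(0) \in \RR$, $\gamma^{n,m}(]0,\infty[) \subseteq \HH$, and $\gamma^{n,m} \to \gamma^n$ locally uniformly as $m \to \infty$. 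For each $n$ pick $m(n)$ with $d(\gamma^{n,m(n)}, \gamma^n) < 1/n$; then $d(\gamma^{n,m(n)}, \gamma) \le 1/n + d(\gamma^n, \gamma) \to 0$, so the simple paths $\gamma^{n,m(n)}$, parametrised by half-plane capacity, converge locally uniformly to $\gamma$.

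Hence $\gamma$ is a locally uniform limit of simple traces parametrised by half-plane capacity, and by the result of \cite{TY20} such a limit is itself a trace, again parametrised by half-plane capacity; this proves the claim. (Alternatively, the diagonal sequence $(\gamma^{n,m(n)})_n$ directly verifies property \ref{it:limit} for $\gamma$, so once one checks that $t \mapsto \hcap \fill(\gamma[0,t])$ equals $2t$, the implication \ref{it:limit}$\Rightarrow$\ref{it:lgp} of \cref{th:trace_characterisation} gives the conclusion.) The only substantive ingredient is the continuity of half-plane capacity along the approximating sequence, supplied by \cite{TY20}; the remaining steps are the diagonal argument and the metrisability of the compact-open topology.
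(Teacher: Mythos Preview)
Your proof is correct and follows exactly the route the paper intends: the corollary is stated as an immediate consequence of property \ref{it:limit}, and your diagonal argument together with \cite[Proposition~6.3]{TY20} is precisely how one spells this out (the paper itself leaves the proof implicit). The only cosmetic remark is that the detour through the metric $d$ and the diagonal sequence could be compressed to the observation that, by \cref{th:trace_characterisation}, the set of half-plane-capacity-parametrised traces equals the closure of the simple ones, hence is closed---but what you wrote is the honest verification of this.
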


\begin{remark}
For this statement, some condition on the parametrisation is required, since in general limits of simple traces might fail to be traces (more precisely, the strict monotonicity of the hulls might fail), e.g.
\[ \gamma_n = \begin{cases}
i2t & \text{for } t \in [0,1],\\
i2+(t-1)(1/n-i) & \text{for } t \in [1,2],\\
1/n+i+(t-2) & \text{for } t \ge 2.
\end{cases} \]
Parametrising traces by half-plane capacity prevents such sequences from converging uniformly since the half-plane capacity parametrisation is stable, see e.g. \cite[Proposition 6.3]{TY20}.
\end{remark}

As an application of \cref{th:trace_characterisation}, we give in \cref{se:zero_boundary_time} a simple proof that Loewner traces spend zero ``capacity time'' on the boundary. This statement should be known among experts, but the property \ref{it:limit} considerably simplifies the proof.

\begin{proposition}\label{pr:zero_boundary_time}
Let $\gamma\colon {[0,\infty[} \to \barH$ be a Loewner trace parametrised by half-plane capacity. Then the set $\{ t\ge 0 \mid \gamma(t) \in \RR \}$ has measure $0$.
\end{proposition}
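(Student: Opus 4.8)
The plan is to use characterisation \ref{it:limit}: approximate $\gamma$ by simple traces $\gamma^n$ and transfer the statement through the limit. First I would recall that for a simple trace $\gamma^n$ parametrised by half-plane capacity, the set $\{t : \gamma^n(t) \in \RR\}$ is just $\{0\}$, since by definition $\gamma^n(\,]0,\infty[\,) \subseteq \HH$; so the statement is trivial at the level of the approximants, and the whole difficulty is in controlling the limit. The quantity one wants to bound is the Lebesgue measure of the (closed) set $A \defeq \{t \ge 0 : \gamma(t) \in \RR\}$.

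The key mechanism I would exploit is that half-plane capacity parametrisation ties ``time spent'' to ``capacity accumulated'', and capacity is controlled by how far the curve reaches into $\HH$. Concretely, if $\gamma|_{[a,b]}$ stays within distance $\varepsilon$ of $\RR$, then $\hcap(K_b \setminus K_a)$ — which equals $2(b-a)$ in capacity parametrisation — is small, of order $\varepsilon^2$ (after composing with $g_a$, the image hull has diameter $O(\varepsilon)$ and lies near $\RR$, hence has half-plane capacity $O(\varepsilon^2)$; the relevant monotonicity and scaling estimates for $\hcap$ are standard and are the tools used throughout the paper). Thus I would fix $\varepsilon > 0$, consider the open set $U_\varepsilon \defeq \{t : \dist(\gamma(t),\RR) > \varepsilon\}$, and cover its complement (within a large time interval $[0,T]$) by the times at which $\gamma$ is $\varepsilon$-close to $\RR$; partitioning that closed set into the maximal intervals on which $\gamma$ stays $\varepsilon$-close, each contributes capacity $O(\varepsilon^2)$ per unit length, and summing gives that the measure of $\{t \in [0,T] : \dist(\gamma(t),\RR) \le \varepsilon\}$ times a constant is at most... — here one must be slightly careful, since the naive bound goes the wrong way. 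The correct route is: the set $\{t \in [0,T] : \gamma(t)\in\RR\}$ has an $\varepsilon$-neighbourhood (in time) on which $\gamma$ stays within some $\delta(\varepsilon)\to 0$ of $\RR$ by uniform continuity of $\gamma$ on $[0,T]$; on each component $[a,b]$ of that neighbourhood, $2(b-a) = \hcap(K_b\setminus K_a) \le C\,\delta(\varepsilon)^2$, but also the number of components is controlled; letting $\varepsilon \to 0$ forces the total to $0$.

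An alternative — and I suspect cleaner — argument avoids the covering bookkeeping by working directly with the approximation. Given $\delta > 0$, pick $n$ with $\norm{\gamma^n - \gamma}_{\infty,[0,T]} < \delta$ and with $\gamma^n$ parametrised by half-plane capacity (legitimate by \cref{re:hcap_need}); then whenever $\gamma(t) \in \RR$ we have $\Im \gamma^n(t) < \delta$, i.e. $\gamma^n(t)$ lies in the thin strip $S_\delta \defeq \{0 \le \Im z < \delta\}$. So $A \cap [0,T] \subseteq (\gamma^n)^{-1}(S_\delta)$, and it suffices to bound the Lebesgue measure of the set of times a half-plane-capacity-parametrised curve spends in $S_\delta$, uniformly in the curve. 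For this one uses the capacity estimate: if $\gamma^n$ spends a time interval inside $S_\delta$ of total length $\ell$, the accumulated half-plane capacity over those times is $2\ell$, while each maximal excursion into $S_\delta$ starting and ending on $\RR$ accumulates capacity at most $C\delta \cdot (\text{horizontal extent of that excursion})$; since the curve is in the half-plane-capacity class its total reach is controlled on $[0,T]$, and one concludes $2\ell \le C\delta \cdot (\text{diam}\,\gamma^n[0,T] + o(1)) \le C'\delta$ for $n$ large. Hence $|A \cap [0,T]| \le C'\delta$ for every $\delta$, so $|A\cap[0,T]| = 0$, and letting $T\to\infty$ finishes the proof.

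The main obstacle I anticipate is the capacity estimate itself: quantifying that a curve which stays in the strip $S_\delta$ accumulates half-plane capacity at most $O(\delta \cdot \text{width})$ rather than just $O(\text{width}^2)$, and making sure this is uniform over the (possibly wild, space-filling) approximants — the point being that $\gamma^n$, though simple, may oscillate violently and its horizontal extent over a fixed time window must be controlled by the uniform convergence together with, say, the standard bound relating half-plane capacity to diameter (so that $\diam \gamma^n[0,T]$ is bounded in terms of $T$ and the diameter of $\gamma[0,T]$). Once that uniform strip-capacity bound is in hand, the conclusion is immediate by letting the strip width tend to zero.
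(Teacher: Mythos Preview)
Your overall strategy --- approximate by simple traces $\gamma^n$ in half-plane capacity parametrisation, use $\gamma^{-1}(\RR)\cap[0,T]\subseteq(\gamma^n)^{-1}(S_\delta)$ once $\|\gamma-\gamma^n\|_\infty<\delta$, and then bound the time $\gamma^n$ spends in the strip via a capacity estimate --- is exactly the paper's. The gap is in your execution of that last step.

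You bound the capacity accumulated on each maximal strip-visit $\gamma^n({]s_i,t_i[})$ (these begin and end on $\{\Im z=\delta\}$, not on $\RR$, but that is a minor slip) by $C\delta$ times its horizontal extent $w_i$, and then jump to $2\ell\le C\delta\cdot\diam\gamma^n[0,T]$. The implicit step $\sum_i w_i\le C\,\diam\gamma^n[0,T]$ is false: a simple curve can zig-zag, dipping into $S_\delta$ many times over the same horizontal range, so the $w_i$ overlap heavily and their sum can be made arbitrarily large while the diameter stays bounded. (Concretely, inside $[0,R]\times[0,1]$ alternate long horizontal sweeps near height $1$ with long horizontal sweeps near height $\delta/2$, nested so as to remain simple; each of the $N$ strip-visits has $w_i\approx R$.) The obstacle you flag in your final paragraph --- controlling $\diam\gamma^n[0,T]$ --- is a non-issue; controlling the \emph{sum} of the capacity increments is the actual difficulty, and bounding them term by term and then summing widths does not do it.

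The paper resolves this with a short but essential lemma (\cref{le:hcap_sum_bound}): for nested hulls $K_{s_1}\subseteq K_{t_1}\subseteq K_{s_2}\subseteq\dots$ one has
\[
\sum_i \hcap\bigl(g_{K_{s_i}}(K_{t_i}\setminus K_{s_i})\bigr)\ \le\ \hcap\Bigl(\,\bigcup_i(K_{t_i}\setminus K_{s_i})\Bigr).
\]
The left side is exactly $2\sum_i(t_i-s_i)=2\ell$; the right side is the half-plane capacity of a \emph{single} set $\bigcup_i\gamma^n({]s_i,t_i]})\subseteq[-R,R]\times[0,\delta]$, hence at most $c\delta$ with $c$ depending only on $R\approx\diam\gamma[0,T]$. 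So one does not sum the widths of the individual excursions at all; one passes to the union first and bounds its capacity in one shot.
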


Finally, we discuss again the relationship between trace and driving function. As we have commented above, our proof of property \ref{it:limit} will not involve regularising the driving function of $\gamma$. Instead, we are going to construct $\gamma^n$ in a geometric fashion that does not take the driving function into account. Therefore it is natural to ask what happens to the driving functions during our construction. In fact, we can show that the uniform convergence of traces already implies uniform convergence of their driving functions. Surprisingly, we have not found this explicit statement in the literature. The closest result we have found is \cite[Theorem 4.3]{LMR10}, and indeed we can use almost the same proof to show our claim. The proof will be given in \cref{se:trace_to_driver_continuous}.

\begin{theorem}\label{thm:trace_to_driver_continuous}
Let $\gamma^n \in C([0,\infty[;\barH)$ be a sequence of chordal Loewner traces parametrised by half-plane capacity, with driving functions $\xi^n \in C([0,\infty[;\RR)$. If $\gamma^n \to \gamma$ locally uniformly, then $\xi^n \to \xi$ locally uniformly, where $\xi$ is the driving function of $\gamma$.
\end{theorem}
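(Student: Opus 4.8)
The plan is to follow the strategy of \cite[Theorem 4.3]{LMR10}, which controls the driving function in terms of geometric quantities of the trace. The key observation is a quantitative, local estimate: if two Loewner chains (parametrised by half-plane capacity) have hulls that are close in some suitable sense on a time interval $[t, t+h]$, then their driving functions are close at time $t$ up to an error depending on $h$ and on the ``thickness'' of the hulls. Concretely, for a Loewner chain driven by $\xi$, one has $|\xi(t) - \xi(t+h)|$ and the displacement of $g_{t+h} \circ g_t^{-1}$ controlled by $\diam(K_{t+h} \setminus K_t)$ and by $h = \tfrac12 \hcap(g_t(K_{t+h}\setminus K_t))$; this is essentially the content of the estimates in \cite{LMR10} bounding the driving function by the diameter of the incremental hull. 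So the first step is to recall (or reprove) this local estimate in the form: $\sup_{s \in [t,t+h]} |\xi(s) - \xi(t)| \le C \cdot \operatorname{diam}\big(g_t(K_{t+h} \setminus K_t)\big)$, together with the Loewner-equation bound $h \asymp \operatorname{diam}(\cdot)^2$ in the worst case, so that smallness of $h$ forces smallness of the diameter of the mapped-out increment (away from a possible ``fjord'' issue, which is where care is needed).

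Second, I would use the hypothesis $\gamma^n \to \gamma$ locally uniformly to get uniform control on the moduli of continuity: since the $\gamma^n$ are parametrised by half-plane capacity and converge uniformly on compacts, they are equicontinuous on compacts, and moreover the hulls $K^n_t = \fill(\gamma^n[0,t])$ converge to $K_t$ in the Carathéodory sense, uniformly in $t$ on compacts. This is where one invokes stability of the half-plane-capacity parametrisation (\cite[Proposition 6.3]{TY20}) and the fact (from \cref{th:trace_characterisation}, property \ref{it:limit}, and the preceding discussion) that the limit $\gamma$ is itself a trace with a well-defined driving function $\xi$. From Carathéodory convergence of $\HH \setminus K^n_t \to \HH \setminus K_t$ one gets $g^n_t \to g_t$ locally uniformly on $\HH$, uniformly for $t$ in a compact set.

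Third, combine the two: fix $T$ and $\varepsilon > 0$. Choose $h$ small so that, by equicontinuity of the traces and the capacity estimate, $\operatorname{diam}(g^n_t(K^n_{t+h} \setminus K^n_t)) < \varepsilon$ for all $n$ and all $t \le T$ simultaneously — this uniformity is exactly what the local estimate in step one buys us, since the incremental half-plane capacity is $h$ regardless of $n$. This gives $|\xi^n(s) - \xi^n(t)| < C\varepsilon$ for $|s - t| \le h$, i.e. a uniform (in $n$) modulus of continuity for the driving functions on $[0,T]$; by Arzelà--Ascoli a subsequence of $(\xi^n)$ converges locally uniformly. To identify the limit, note that along that subsequence the Loewner equation $\partial_t g^n_t(z) = 2/(g^n_t(z) - \xi^n(t))$ passes to the limit (using step two's convergence $g^n_t \to g_t$), so the limiting driving function generates the chain $(g_t)$, hence equals $\xi$ by uniqueness. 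Since every subsequence has a further subsequence converging to $\xi$, the whole sequence converges.

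The main obstacle I anticipate is the uniformity in step three, specifically ruling out that a tiny incremental half-plane capacity $h$ could still correspond to a geometrically large (large-diameter) piece of trace — this happens precisely when the trace makes a long excursion into a thin ``fjord'' of $\HH \setminus K^n_t$, where a lot of Euclidean length carries little capacity. In \cite{LMR10} this is handled by a careful harmonic-measure/extremal-length argument showing that, after applying $g^n_t$, such an excursion is mapped to something of small diameter anyway, so the driving-function increment stays small; I would need to check that this argument is uniform over the sequence, which should follow from the uniform Carathéodory convergence established in step two. A secondary technical point is boundary behaviour — when $\gamma(s) \in \RR$ or $\gamma$ touches $\partial K_t$ — but the zero-boundary-time statement \cref{pr:zero_boundary_time} (or a direct approximation) should let one reduce to the interior case.
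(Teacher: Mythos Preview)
Your approach is correct and genuinely different from the paper's. Both routes rest on \cite[Lemma 4.5]{LMR10}, which bounds $\diam g_t(S)$ in terms of $\diam S$ and $\diam K_t$, but they use it to different ends. The paper fixes $t$ and directly estimates $\abs{\xi(t)-\xi^n(t)}$ via the three-term splitting
\[
\abs{\xi(t)-g_t(\gamma(t+h))}+\abs{g_t(\gamma(t+h))-g^n_t(\gamma(t+h))}+\abs{g^n_t(\gamma(t+h))-\xi^n(t)},
\]
choosing $h$ so that $\gamma(t+h)$ sits at a definite distance $d>0$ from $K_t$ (hence also from $K^n_t$ once $\norm{\gamma-\gamma^n}<d/2$). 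The outer terms are handled by Lemma 4.5; the middle term---comparing $g_t$ and $g^n_t$ at a common interior point---requires the additional hyperbolic-distance estimate \cite[Lemma 4.8]{LMR10} together with an explicit computation. Your route instead uses Lemma 4.5 only to obtain a uniform-in-$n$ modulus of continuity for $\xi^n$ (since equicontinuity of $(\gamma^n)$ gives $\diam\gamma^n[s,t]\le\omega(\abs{t-s})$ and $\diam K^n_t$ is uniformly bounded), then invokes Arzel\`a--Ascoli and identifies the limit via Carath\'eodory convergence and uniqueness of the driving function. This avoids Lemma 4.8 and the hyperbolic computation entirely, at the cost of being non-quantitative; the paper's argument, by contrast, yields an explicit (if non-uniform) bound on $\abs{\xi(t)-\xi^n(t)}$ in terms of $\norm{\gamma-\gamma^n}$.

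Two remarks on your write-up. First, the ``fjord'' worry in step three is unnecessary in your own setup: you are not trying to deduce small trace-diameter from small capacity, but the reverse, and equicontinuity of $(\gamma^n)$ (which follows immediately from uniform convergence to the continuous limit $\gamma$) gives $\diam\gamma^n[t,t+h]$ small directly. Lemma 4.5 then converts this to small $\diam g^n_t(K^n_{t+h}\setminus K^n_t)$, uniformly in $n$ because the constants depend only on $\sup_n\diam\gamma^n[0,1]$. Second, the boundary issue you flag is handled by property~(ii) of \cref{th:trace_characterisation}: each $\gamma^n_t$ is continuous, so $\overline{g^n_t(\gamma^n[t,t+h])}$ is connected and Lemma 4.5 applies in the extended form noted in \cref{sec:preliminaries}; there is no need to invoke \cref{pr:zero_boundary_time}.
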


Note that the map from the trace to its driving function is not uniformly continuous, as the example \cite[Figure 6]{LMR10} shows. Moreover, the converse of \cref{thm:trace_to_driver_continuous} is false, i.e. uniform convergence of driving functions does not imply uniform convergence of their traces, as the example \cite[Example 4.49]{Law05} shows.

One may ask to what extent the approximating sequence in property \ref{it:limit} is unique. Since the left/right turns (in the hyperbolic sense) of a trace are dictated by the increments of its driving function, we see that all $\gamma^n$ will behave similarly in terms of left/right turns. One may also ask for a quantitative description, but we will not investigate it in this paper.

\medskip
\textbf{Acknowledgements:} I would like to thank Steffen Rohde and Fredrik Viklund for helpful comments on earlier versions of the paper. I also thank the referee for their comments.

\section{Preliminaries and Outline}
\label{sec:preliminaries}

We give a brief summary of chordal Loewner chains and traces, and the notation we use in the paper. A compact set $K \subseteq \barH$ such that $\HH \setminus K$ is simply connected is called a compact $\HH$-hull. We identify compact $\HH$-hulls that have the same intersection with $\HH$ (i.e. we distinguish them only by the complementary domains $\HH \setminus K$). We call the mapping-out function of $K$ the unique conformal map $g_K\colon \HH \setminus K \to \HH$ that satisfies the hydrodynamic normalisation $g_K(z) = z+O(\frac{1}{z})$ at $\infty$. The half-plane capacity of $K$ is $\hcap(K) \defeq \lim_{z \to \infty} z(g_K(z)-z) \in [0,\infty[$. For a compact set $A \subseteq \barH$, we define $\fill(A) \subseteq \barH$ to be the union of $A$ with all bounded connected components of $\HH \setminus A$. In case $A$ is connected to $\RR$, this is the smallest compact $\HH$-hull that contains $A$.

A strictly increasing family $(K_t)_{t \ge 0}$ of compact $\HH$-hulls is said to have the local growth property if for any $\varepsilon > 0$ and $T \ge 0$ there exists $\delta > 0$ such that for every $t \in [0,T]$ there exists a crosscut of $\HH \setminus K_t$ of length at most $\varepsilon$ that separates $K_{t+\delta}\setminus K_t$ from $\infty$. When we call $g_t$ the mapping-out function of $K_t$, the local growth property is equivalent to saying that for any $\varepsilon > 0$ and $T \ge 0$ there exists $\delta > 0$ such that $\diam g_t(K_{t+\delta}\setminus K_t) < \varepsilon$ for all $t \in [0,T]$. In particular, the family $(K_{t,s})_{s \ge t}$ with $K_{t,s} \defeq g_t(K_s\setminus K_t)$ again satisfies the local growth property.

For a strictly increasing family $(K_t)_{t \ge 0}$ of compact $\HH$-hulls that satisfies the local growth property, there exists a unique continuous real-valued function $\xi\colon {[0,\infty[} \to \RR$ such that $\xi(t) \in \overline{K_{t,s}}$ for all $0 \le t < s$. This is called the Loewner transform or driving function of $(K_t)_{t \ge 0}$. The correspondence between $(K_t)_{t \ge 0}$ and $\xi$ is one-to-one when we fix the parametrisation of $(K_t)_{t \ge 0}$ in a certain way, e.g. by half-plane capacity, meaning $\hcap(K_t) = 2t$.

A continuous trace is a continuous path $\gamma\colon {[0,\infty[} \to \barH$ with $\gamma(0) \in \RR$ such that the family $\fill(\gamma[0,t])$ satisfies the local growth property. We say that $\xi\in C([0,\infty[;\RR)$ generates a continuous trace if there exists such $\gamma$ that is parametrised by half-plane capacity and has $\xi$ as driving function, which is equivalent to saying that the limit $\gamma(t) = \lim_{y \searrow 0} g_t^{-1}(iy+\xi(t))$ exists for all $t$ and is continuous in $t$. A trace is called simple if it intersects neither itself nor $\RR \setminus \{\gamma(0)\}$.

When we have two traces $\gamma^1\colon [0,t_1] \to \barH$ and $\gamma^2\colon [t_1,t_2] \to \barH$, we can glue them to a trace $\gamma(s) = \gamma^1(s)$ on $[0,t_1]$ and $\gamma(s) = g_{t_1}^{-1}(\gamma^2(s)-\gamma^2(t_1)+\xi(t_1))$ on $[t_1,t_2]$, and the driving function of $\gamma$ is the concatenation of $\xi^1$ and $\xi^2$. The converse statement is \cref{co:driver_restriction} which we will prove in this paper.

\subsection{Outline}

We give a few comments and first steps on the proof of \cref{th:trace_characterisation}.

The fact that \ref{it:limit} implies \ref{it:lgp} has been shown in \cite[Proposition 6.3]{TY20}. The converse statement, i.e. \ref{it:lgp} implies \ref{it:limit}, is proven in \cref{se:trace_approximation}. For that part we will also make use of the property \ref{it:markov} which we will show first (below and in \cref{se:trace_excursions}).

The fact that \ref{it:markov} implies \ref{it:lgp} follows almost immediately from \cite[Lemma 4.5]{LMR10}. One has to observe that although the lemma is formulated for connected sets $S$, its proof shows that it suffices when $\overline{g(S)}$ is connected. In particular, when we assume $\gamma_t$ to be continuous, the lemma can be applied to
\[ \diam g_t(\gamma[t,t+\delta]) \le c\sqrt{\diam\gamma[t,t+\delta]} . \]
With the uniform continuity of $\gamma$, the local growth property follows.

For the proof that \ref{it:lgp} implies \ref{it:markov}, we gather a few preliminary observations. The continuity of $\gamma$ tells us an important piece of information about $\gamma_t$. Recall the following statement which follows from \cite[Theorem 1.7]{Pom92} via a Möbius transformation taking $z \in \partial H$ to $\infty$.
\begin{lemma}
Let $f\colon \DD \to H \subseteq \hatC$ be conformal, and $z \in \partial H$. Then the set $f^{-1}(z) \subseteq \partial \DD$ has measure $0$. 
\end{lemma}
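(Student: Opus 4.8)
The plan is to move $z$ to $\infty$ by a Möbius transformation and then invoke the classical fact that a univalent function on $\DD$ has finite nontangential boundary values at almost every point.

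First I would pick any Möbius transformation $M$ of $\hatC$ with $M(z)=\infty$ and set $g\defeq M\circ f$. Then $g$ is again conformal on $\DD$, now onto $\Omega\defeq M(H)$; since $M$ is a homeomorphism of $\hatC$ and $z\in\partial H$, we have $\infty=M(z)\in\partial\Omega$, hence $\Omega\subseteq\CC$ and $g$ is a $\CC$-valued univalent function. Interpreting $f^{-1}(z)$, as is natural in this context, as the set of $\zeta\in\partial\DD$ at which the radial limit $f^*(\zeta)\defeq\lim_{r\nearrow1}f(r\zeta)$ exists and equals $z$ (equivalently, via principal points of prime ends, which agree with radial limits a.e.), we get $f^{-1}(z)=\{\zeta\in\partial\DD : g(r\zeta)\to\infty \text{ as } r\nearrow1\}$, because $M$ is continuous on $\hatC$. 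So it remains to show that this last set is a null set.

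For that I would use that every univalent function on $\DD$ belongs to $H^p$ for all $p<\tfrac12$ (Prawitz's theorem; see e.g. Duren's \emph{Theory of $H^p$ Spaces}), so in particular $g\in H^p$ for some $p>0$. By Fatou's theorem for Hardy spaces, $g$ then has a finite nontangential (in particular radial) limit at almost every point of $\partial\DD$; hence the set where $g(r\zeta)\to\infty$ is contained in a null set and is therefore null, which is what we needed. One could equally normalise $M$ so that $M(z)=0$ and $M(w_0)=\infty$ for some $w_0\in\partial H\setminus\{z\}$ — which exists since $\partial H$ has more than one point — so that $g=M\circ f$ is a $\CC$-valued, non-constant function in the Nevanlinna class, and then conclude that $\{\zeta:g^*(\zeta)=0\}=f^{-1}(z)$ is null from the Riesz uniqueness theorem.

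I do not expect a real obstacle here: the statement is just the classical boundary behaviour of univalent maps, which is exactly the content of \cite[Theorem 1.7]{Pom92}. The only points that deserve a line of care are checking that the chosen Möbius normalisation keeps the image inside $\CC$ (so that $g$ is genuinely $\CC$-valued and the Hardy-space input applies) and pinning down the interpretation of $f^{-1}(z)$ via radial limits.
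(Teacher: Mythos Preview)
Your proposal is correct and is exactly the approach the paper indicates: it does not give a detailed proof but simply says the lemma ``follows from \cite[Theorem 1.7]{Pom92} via a M\"obius transformation taking $z \in \partial H$ to $\infty$''. You have filled in precisely those details (the M\"obius normalisation making $g$ $\CC$-valued, then Prawitz/$H^p$ to get a.e.\ radial limits), so there is nothing to add.
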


\begin{corollary}
Let $s \ge t$. The set of limit points of $\gamma_t$ at $s$ is a single point or a subset of $\RR$ with measure $0$.
\end{corollary}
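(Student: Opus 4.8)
The plan is to write $L = L_s$ for the cluster set in question, $L = \bigcap_{\varepsilon>0}\overline{\{\gamma_t(s') : s'\ge t,\ \lvert s'-s\rvert<\varepsilon\}}$, and to split according to the position of $z\defeq\gamma(s)$. First note that by the standard bound $\lvert g_t(z)-z\rvert = O(\diam K_t)$ for $z\in\HH\setminus K_t$, together with continuity of $\gamma$, the path $\gamma_t$ is bounded on a neighbourhood of $s$, so $L$ is a (possibly empty) compact subset of $\RR\cup\barH$ contained in a bounded region. If $z\in\HH\setminus K_t$, then $g_t$ is a homeomorphism near $z$, hence $\gamma_t(s')\to g_t(z)$ as $s'\to s$ and $L=\{g_t(z)\}$ is a single point — the first alternative. (If $z$ lies in the open interior of $K_t$, then $\gamma(s')\in K_t$ for all $s'$ near $s$, so $\gamma_t$ is not defined near $s$ and $L=\emptyset$; this is consistent with the second alternative.)

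It remains to treat $z\in\partial(\HH\setminus K_t)$, i.e. $z\in\partial K_t\cup\RR$, and to show that then $L$ is a subset of $\RR$ of Lebesgue measure zero. For the inclusion $L\subseteq\RR$: suppose $w\in L\cap\HH$ and pick $s_n\to s$ with $\gamma(s_n)\in\HH\setminus K_t$ and $\gamma_t(s_n)\to w$. Since $g_t^{-1}\colon\HH\to\HH\setminus K_t$ is continuous, $\gamma(s_n)=g_t^{-1}(\gamma_t(s_n))\to g_t^{-1}(w)\in\HH\setminus K_t$, contradicting $\gamma(s_n)\to z\in\partial(\HH\setminus K_t)$; and $L$ is bounded, so indeed $L\subseteq\RR$.

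For the measure bound, the point is that every $w\in L$ is, via $g_t$, (identified with) a prime end of $\HH\setminus K_t$ whose impression contains $z$; equivalently $z\in C(g_t^{-1},w)$, the cluster set of $g_t^{-1}$ at $w$. Transporting to the disc via the Cayley map $\psi\colon\DD\to\HH$, one applies the Lemma to the conformal map $g_t^{-1}\circ\psi\colon\DD\to\HH\setminus K_t$ and the boundary point $z\in\partial(\HH\setminus K_t)$: the set $(g_t^{-1}\circ\psi)^{-1}(z)$ has measure zero, hence so does $L=\psi(\{w : z\in C(g_t^{-1}\circ\psi,\ \psi^{-1}(w))\})$ after pushing forward. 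The main obstacle is precisely this last identification: making the passage from ``$z$ is a sequential cluster value of $\gamma_t$ at $s$'' to ``the prime end associated with $w$ has $z$ in its impression / $g_t^{-1}$ has radial limit $z$ at $w$'' rigorous — this uses Lindelöf's theorem applied to the approach curve obtained as the $g_t$-image of a small sub-arc of $\gamma$ abutting $z$ (which lies in $\HH\setminus K_t$ except at its endpoint), together with some care for the (measure-zero) set of $w$ at which $g_t^{-1}$ oscillates rather than converges, and for the portions of $\gamma$ near $s$ that run along $\partial K_t$, where $\gamma_t$ is only defined up to a choice of prime end.
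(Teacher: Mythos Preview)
Your argument is correct and is exactly the one the paper has in mind (the paper states the corollary without proof, as an immediate consequence of the Lemma). One simplification: the Lindel\"of step in your last paragraph is both unnecessary and a little fragile---there is no guarantee the $g_t$-image of a sub-arc of $\gamma$ lands at the particular $w$ you started with---whereas the bare observation $g_t^{-1}(\gamma_t(s_n))=\gamma(s_n)\to z$ already places $z$ in the cluster set $C(g_t^{-1},w)$, and the cluster-set preimage of a single boundary point under a conformal map into $\HH$ is null (angular limits exist a.e.\ since $e^{ig_t^{-1}}$ is bounded analytic, and then Riesz uniqueness applies).
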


Since $\gamma$ is continuous, all $K_t$ are locally connected, and hence $\gamma_t$ is right-continuous, and is continuous at times where it is in $\HH$. It follows that $\gamma_t$ consists of a countable number of excursions in $\HH$ from $\RR$. Together with the previous observation, we conclude the following.
\begin{lemma}
For any $\delta > 0$, there are finitely many excursions of $\gamma_t$ with diameter greater than $\delta$ on finite time intervals.
\end{lemma}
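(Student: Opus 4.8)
The plan is to argue by contradiction and to reduce the statement to a Beurling-type distortion estimate for the mapping-out function $g_t$, of the same kind as \cite[Lemma 4.5]{LMR10} already used for ``\ref{it:markov}$\Rightarrow$\ref{it:lgp}''. Fix $T\ge t$; it suffices to bound the number of excursions of $\gamma_t$ contained in $[t,T]$ with diameter $>\delta$. Suppose there were infinitely many, say $I_n=\,]a_n,b_n[\,\subseteq[t,T]$ with $\diam\gamma_t(I_n)>\delta$. Being pairwise disjoint subintervals of $[t,T]$ they satisfy $\abs{I_n}\to0$, so after passing to a subsequence $a_n\to a$ and $b_n\to a$ for some $a\in[t,T]$; by uniform continuity of $\gamma$ on $[t,T]$ the sets $\gamma(\overline{I_n})$ then shrink, in the Hausdorff sense, to the single point $z_0\defeq\gamma(a)$. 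Since $\gamma_t$ takes values in $\HH$ on each excursion, $\gamma(I_n)\subseteq\HH\setminus K_t$ and hence $z_0\in\overline{\HH\setminus K_t}$; and since $\gamma_t$ is continuous wherever it lies in $\HH$, it is continuous on the open interval $I_n$, so $\gamma_t(I_n)$ and its closure $\overline{g_t(\gamma(I_n))}$ are connected, the latter having diameter equal to that of the excursion.

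I would then distinguish two cases according to the location of $z_0$. If $z_0\notin\overline{K_t}$, choose $\rho>0$ with $\dist(z_0,\overline{K_t})>\rho$; then for all large $n$ the set $\gamma(\overline{I_n})$ lies in the compact set $V\defeq\{z\in\barH:\abs{z}\le R_T,\ \dist(z,\overline{K_t})\ge\rho/2\}$, where $R_T$ is a bound for $\abs{z}$ on $\gamma[t,T]$, and on $V$ the map $g_t$ is continuous (it extends analytically across $\RR$ there by Schwarz reflection) and hence uniformly continuous, so $\diam\gamma_t(I_n)=\diam g_t(\gamma(I_n))\to0$ --- contradicting $\diam\gamma_t(I_n)>\delta$. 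If instead $z_0\in\overline{K_t}$, then in fact $z_0\in\partial K_t$ (because also $z_0\in\overline{\HH\setminus K_t}$), so $\dist(\gamma(\overline{I_n}),\overline{K_t})\to0$; joining $\gamma(\overline{I_n})$ to a nearest point of $\overline{K_t}$ by a straight segment produces a connected set $S_n\subseteq\barH$ with $\gamma(I_n)\subseteq S_n$, with $S_n$ meeting $\overline{K_t}$, with $\diam S_n\to0$, and --- using right-continuity of $\gamma_t$ at $a_n$ and its continuity on $I_n$ --- with $\overline{g_t(S_n)}$ connected. The estimate \cite[Lemma 4.5]{LMR10}, applied in the variant where connectedness of $\overline{g_t(S_n)}$ suffices, then gives a constant $c<\infty$ depending only on $t$ and $T$ with $\diam\gamma_t(I_n)\le\diam g_t(S_n)\le c\sqrt{\diam S_n}\to0$, again a contradiction. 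Hence there are only finitely many such excursions.

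The compactness extraction and the Schwarz-reflection argument in the first case are routine; I expect the real work to be in the case $z_0\in\partial K_t$. There one must genuinely exploit that $z_0$ lies on the hull boundary --- this is exactly what forces $\dist(\gamma(\overline{I_n}),\overline{K_t})\to0$ and thereby allows one to attach the shrinking excursion to $K_t$ without inflating its diameter --- and then verify the hypotheses of the distortion estimate, above all that $\overline{g_t(S_n)}$ is connected. This verification is where the prime-end subtleties at the excursion endpoints, and (upstream) the fact that $\gamma_t$ is right-continuous with boundary limit sets of measure zero which is what endows $\gamma_t$ with its excursion structure, must be handled carefully; I would expect that, rather than the estimate itself, to be the main obstacle.
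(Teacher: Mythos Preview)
Your argument is correct and takes a genuinely different route from the paper's. The paper works entirely in the image domain: by Hausdorff compactness a subsequence of the large excursions converges to a connected compact set $A\subseteq\barH$, and since the occurring times converge to some $\bar s$, every point of $A$ is a limit point of $\gamma_t$ at $\bar s$; the preceding Corollary (from Pommerenke's F.--M.~Riesz-type theorem) says such a limit set is either a singleton or a null subset of $\RR$, so by connectedness $A$ is a point --- contradicting $\diam>\delta$. You instead pull everything back via the continuity of $\gamma$: the preimages $\gamma(\overline{I_n})$ shrink to a point $z_0$, and you push forward using either local analyticity of $g_t$ (when $z_0\notin\overline{K_t}$) or the LMR10 Beurling-type bound (when $z_0\in\partial K_t$), in either case forcing $\diam\gamma_t(I_n)\to0$. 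Your approach avoids the measure-theoretic input altogether and reuses the distortion estimate already in play for \ref{it:markov}$\Rightarrow$\ref{it:lgp}; the paper's is softer and stays within the target picture.

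One small point of care in your Case~2: to secure connectedness of $\overline{g_t(S_n)}$ it is cleanest to take $p_n\in\gamma(I_n)$ strictly inside the excursion (so that $g_t(p_n)\in\gamma_t(I_n)$), and to note that the nearest-point segment $(p_n,q_n)$ automatically avoids $\overline{K_t}$, hence $g_t$ is continuous on $[p_n,q_n)$; then $\overline{g_t(S_n)}$ is a union of two arcs sharing $g_t(p_n)$. Right-continuity of $\gamma_t$ at $a_n$ is not actually what you need here --- it is the continuity of $g_t$ along the attached segment that does the work.
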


\begin{proof}
Suppose there are infinitely many excursions of $\gamma_t$ with diameter greater than $\delta$ on some finite time interval $[t,T]$. Since $\gamma_t$ is bounded, by compactness of the Hausdorff metric (see \cite[Theorem 3.2.4]{Bee93}) we can find a sequence of excursions $\tilde\gamma_n$ (considered as compact sets in $\barH$) that converge in the Hausdorff metric to a compact set $A \subseteq \barH$, and $A$ is connected (see \cite[Exercise 3.2.8]{Bee93}). We can choose the sequence such that also the occurring times of $\tilde\gamma_n$ converge to some $\bar s \in [t,T]$. Then all points in $A$ are limit points of $\gamma_t$ at $\bar s$, and therefore a single point or a subset of $\RR$ with measure $0$. Since $A$ is connected, it must be a single point, contradicting $\diam(\tilde\gamma_n) > \delta$.
\end{proof}

It follows easily that $K_{t,s}$ is locally connected for each $s \ge t$ (see \cref{thm:beta_locally_connected}). Note that this is not enough to show that $\gamma_t$ is continuous, as the following variation of an example by D. Belyaev in \cref{fig:trace_not_continuous} shows.

\begin{figure}[h]
	\centering
	\includegraphics[width=0.48\textwidth]{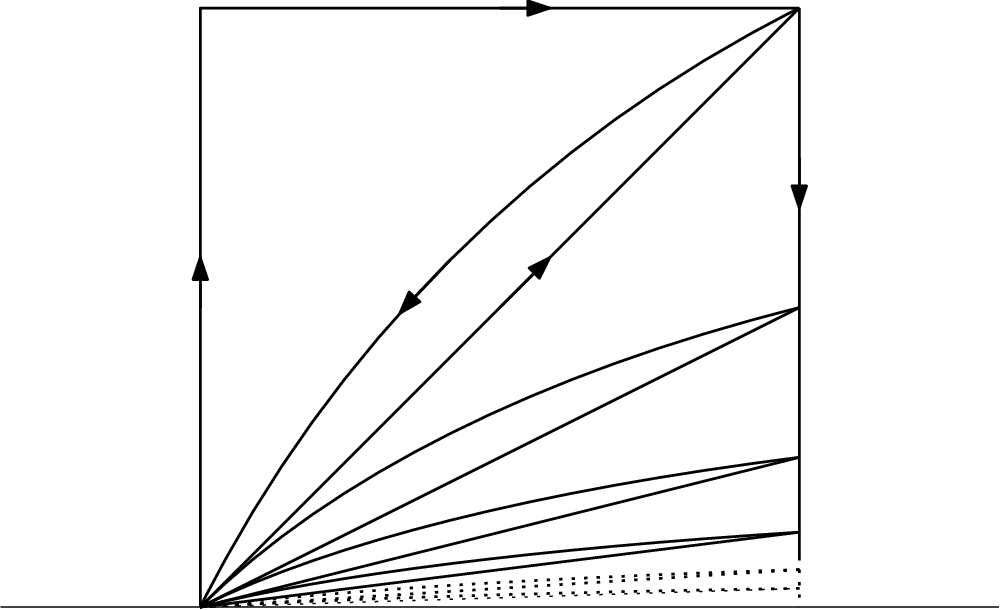}
	\caption{All hulls are locally connected, but the ``trace'' is not continuous. Variation of an example by D. Belyaev \cite[p. 212]{Bel20}.}
	\label{fig:trace_not_continuous}
\end{figure}

Observe that in the above ``non-example'' there are infinitely many large excursions. We show in \cref{se:trace_excursions} that all counterexamples look like this, and hence do not apply to $\gamma_t$. This will establish the continuity of $\gamma_t$.

For the convenience of the reader we recall two classical results about the topology of the plane. See \cite[Section 1.5]{Pom75} for proofs.

\begin{theorem}[Janiszewski]
Let $A_1,A_2 \subseteq \hatC$ be closed sets such that $A_1 \cap A_2$ is connected. If two points $a,b \in \hatC$ are neither separated by $A_1$ nor by $A_2$, then they are not separated by $A_1 \cup A_2$.
\end{theorem}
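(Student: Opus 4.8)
The plan is to translate ``$a,b$ not separated by $A$'' into the existence of a continuous branch of the argument on $A$, a reformulation under which the role of the connectedness of $A_1\cap A_2$ becomes transparent. After applying a Möbius transformation of $\hatC$ we may assume $a=0$ and $b=\infty$, so that each $A_j$ is a compact subset of $\CC\setminus\{0\}$. The key reduction is the following: for a closed set $A\subseteq\hatC$ with $0,\infty\notin A$, the points $0$ and $\infty$ are \emph{not} separated by $A$ if and only if there is a continuous function $h\colon A\to\RR$ with $z=\abs{z}\,e^{ih(z)}$ for every $z\in A$ (equivalently, a single-valued continuous branch of $\log$ on $A$).

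Granting this, the theorem follows quickly. By hypothesis there are continuous branches $h_1$ of the argument on $A_1$ and $h_2$ on $A_2$. On $A_1\cap A_2$ the function $h_1-h_2$ is continuous and $2\pi\ZZ$-valued, hence locally constant; since $A_1\cap A_2$ is connected it is a single constant $2\pi m$. Replacing $h_2$ by $h_2+2\pi m$ we may assume $h_1=h_2$ on $A_1\cap A_2$, and then the pasting lemma for closed sets produces a continuous branch $h$ of the argument on $A_1\cup A_2$. By the reduction, $0$ and $\infty$ are not separated by $A_1\cup A_2$, which is what we wanted.

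It remains to prove the reduction. The direction ``not separated $\Rightarrow$ branch exists'' is easy: if $0$ and $\infty$ lie in the same component $\Omega$ of $\hatC\setminus A$, then $\Omega$ is open and connected, hence contains a Jordan arc $J$ from $0$ to $\infty$; then $A\subseteq\hatC\setminus J$, which is a simply connected domain (the complement of an arc in the sphere is connected and simply connected) avoiding $0$ and $\infty$, so a single-valued branch of $\log$ exists there and restricts to the desired $h$. The converse direction — equivalently, \emph{if $A$ separates $0$ from $\infty$, then $A$ admits no continuous branch of the argument} — is the heart of the matter and the step I expect to be the main obstacle; it is a separation form of the unicoherence of $\hatC$. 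Here one passes to the component $V$ of $\hatC\setminus A$ containing $\infty$, notes that $0\notin\overline V$ and $\partial V\subseteq A$, and uses that $0$ is enclosed by the compact set $\hatC\setminus V$ to conclude that the argument must increase by $2\pi$ ``around $\partial V$''; the one genuinely delicate point is to make this winding statement precise when $\partial V$ is merely a compact set rather than a curve, which is classical and rests on the Jordan curve theorem (see \cite[Section 1.5]{Pom75}). Alternatively, one can bypass the reduction entirely with a homological argument: writing $U_j=\hatC\setminus A_j$, the reduced Mayer--Vietoris sequence of the cover $\{U_1,U_2\}$ of $\hatC\setminus(A_1\cap A_2)$ contains $\tilde H_1(\hatC\setminus(A_1\cap A_2))\to\tilde H_0(\hatC\setminus(A_1\cup A_2))\to\tilde H_0(U_1)\oplus\tilde H_0(U_2)$, and Alexander duality on $\hatC$ gives $\tilde H_1(\hatC\setminus(A_1\cap A_2))\cong\tilde{\check H}^0(A_1\cap A_2)=0$ because $A_1\cap A_2$ is connected; hence the second map is injective, which — since $a,b$ are separated by a closed set $A$ precisely when $[a]-[b]\neq 0$ in $\tilde H_0(\hatC\setminus A)$ — is exactly the contrapositive of the theorem.
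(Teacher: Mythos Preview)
Your proposal is correct. Note, however, that the paper does not actually prove Janiszewski's theorem: it is stated as a classical fact with a reference to \cite[Section~1.5]{Pom75}, so there is no proof in the paper to compare against beyond that citation. Your first approach --- reducing non-separation to the existence of a continuous branch of the argument on $A$ --- is very much in the spirit of Pommerenke's treatment, and indeed you yourself defer the one delicate implication (``$A$ separates $0$ from $\infty$ $\Rightarrow$ no branch exists'') back to that same source, so on that route you and the paper are effectively aligned. Your second, homological argument (Mayer--Vietoris for the open cover $U_j=\hatC\setminus A_j$, combined with Alexander duality $\tilde H_1(\hatC\setminus(A_1\cap A_2))\cong\tilde{\check H}{}^0(A_1\cap A_2)=0$) is a genuinely different and self-contained route that isolates exactly where the connectedness of $A_1\cap A_2$ enters; it buys a proof independent of the Jordan curve theorem, at the price of invoking algebraic-topology machinery well beyond what the paper needs or assumes.
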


\begin{theorem}[Jordan curve theorem]
If $J \subseteq \hatC$ is a simple loop, then $\hatC \setminus J$ has exactly two components $G_0$ and $G_1$, and these satisfy $\partial G_0 = \partial G_1 = J$.
\end{theorem}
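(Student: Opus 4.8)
The plan is to carry out the classical proof of the Jordan curve theorem---which is why the paper merely cites \cite{Pom75}---organised around four steps, the engine being an auxiliary lemma on arcs. First I would show that no \emph{arc}, i.e.\ no homeomorphic image $A=\phi([0,1])$ of a closed interval, separates $\hatC$. Arguing by contradiction, if $A$ separated two points $a,b\in\hatC\setminus A$, then bisecting $A=\phi([0,\tfrac12])\cup\phi([\tfrac12,1])$---with intersection the single (hence connected) point $\phi(\tfrac12)$---the contrapositive of Janiszewski's theorem forces one of the two halves to separate $a$ from $b$ already. Iterating produces nested subarcs $A\supseteq A^{(1)}\supseteq A^{(2)}\supseteq\cdots$, with $A^{(n)}=\phi(I_n)$, $|I_n|=2^{-n}$, each still separating $a$ from $b$, and $\bigcap_n A^{(n)}=\{\phi(t_0)\}$. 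But $\hatC\setminus\{\phi(t_0)\}\cong\CC$ is path-connected, so $a$ and $b$ are joined by a compact path $\Gamma$ missing $\phi(t_0)$; then $\Gamma$ avoids a whole ball about $\phi(t_0)$, hence avoids $A^{(n)}$ once $\diam A^{(n)}$ is small enough, contradicting that $A^{(n)}$ separates $a,b$. So $\hatC\setminus A$ is connected for every arc $A$.

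Next I would fix points $p,q\in J$ and split $J=J_1\cup J_2$ into arcs with $J_1\cap J_2=\{p,q\}$, and prove the separation statement: $\hatC\setminus J$ is \emph{disconnected}. This is the heart of the theorem and, I expect, the main obstacle---there is no local obstruction to connectedness, so the argument is necessarily global. One argues by contradiction: assuming $\hatC\setminus J$ connected (hence path-connected, being a connected open subset of a surface), one constructs an auxiliary arc $A$ from $p$ to $q$ meeting $J$ only at $p,q$, and then, combining Janiszewski's theorem with the non-separation of arcs applied to the pieces of $J_1\cup A$ and $J_2\cup A$, derives a contradiction; this is the delicate classical kernel, and I would defer to \cite{Pom75} for the details. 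A companion argument in the same spirit---using that $\hatC\setminus J_1$ and $\hatC\setminus J_2$ are connected---rules out a third component, leaving exactly two components $G_0,G_1$.

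Finally I would show $\partial G_0=\partial G_1=J$. The inclusion $\partial G_i\subseteq J$ is immediate, since a point of $\overline{G_i}\setminus G_i$ lies neither in $G_i$ nor in any other component (which is open, disjoint from $G_i$, yet would meet $\overline{G_i}$), hence in $J$. For $J\subseteq\partial G_0$, take $z\in J$ and suppose $z\notin\partial G_0$, so some open disk $U\ni z$ has $U\cap G_0=\emptyset$; choose a short subarc $\beta$ of $J$ inside $U$ with $z$ in its relative interior, so that $\alpha\defeq J\setminus\beta^{\circ}$ is an arc. Since $\hatC\setminus\alpha$ is connected, join $a_0\in G_0$ to $a_1\in G_1$ by a path $\Gamma\subseteq\hatC\setminus\alpha$. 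As $G_0\ne G_1$, $\Gamma$ must meet $J$, necessarily inside $\beta^{\circ}\subseteq U$; let $w$ be its first point on $\beta$. Then $\Gamma$ up to $w$ lies, apart from $w$, in $\hatC\setminus J$ and meets $G_0$, hence lies in $G_0$, so $w\in\overline{G_0}\cap U$---contradicting $U\cap G_0=\emptyset$ since $U$ is open. Hence $z\in\partial G_0$, and symmetrically $z\in\partial G_1$, so $\partial G_0=\partial G_1=J$. Of the four steps only the separation statement is genuinely hard; the non-separation-of-arcs lemma is what makes the rest routine.
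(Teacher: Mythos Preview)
The paper supplies no proof of this theorem at all: it merely recalls the statement and refers the reader to \cite[Section 1.5]{Pom75}. Your outline is a faithful sketch of the classical argument found there---the bisection proof that arcs do not separate $\hatC$, the (correctly flagged as hard) separation step, and the boundary identification---so you are doing strictly more than the paper while relying on the same source; there is nothing further to compare.
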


\section{Excursions of Loewner traces}
\label{se:trace_excursions}

In the following, we assume that $\beta\colon {[0,\infty[} \to \barH$ has the following properties (we do not a priori assume $\beta$ to be a continuous function):
\begin{itemize}
\item $\beta$ consists of (a countable number of) excursions in $\HH$, i.e. for each $t \ge 0$ if $\beta(t) \in \HH$, then there exist $t_1 < t < t_2$ such that $\beta$ is continuous on $]t_1,t_2[$, has limits $\beta(t_1-), \beta(t_2+) \in \RR$, and $\beta(]t_1,t_2[) \subseteq \HH$.
\item For each $T \ge 0$ and $\delta > 0$ there exist only finitely many excursions of $\beta$ on the time interval $[0,T]$ with diameter greater than $\delta$.
\item For $t \ge 0$, $K_t \defeq \fill(\beta[0,t])$ are compact, strictly increasing, and satisfy the local growth property.
\end{itemize}
With a slight abuse of notation, an excursion $\tilde\beta$ of $\beta$ will denote either the path $\tilde\beta \in C([t_1,t_2];\barH)$ or the set $\tilde\beta[t_1,t_2] \subseteq \barH$ (where $\tilde\beta(t_1)$ and $\tilde\beta(t_2)$ denote the limit points $\beta(t_1-)$, $\beta(t_2+)$). As usual, we write $H_t \defeq \HH \setminus K_t$.

Observe that the strict monotonicity of $(K_t)$ implies that the set of times that belong to excursions is dense. Moreover, the local growth property implies that $K_t \cap \RR$ is an interval for every $t$.

Observe also that for $z \in \HH$, we have $z \in K_t$ if and only if $z$ lies on or is separated from $\infty$ by some excursion until time $t$. This is because only finitely many excursions have diameter larger than $\Im z$.

The main goal of this section is to show the following.
\begin{proposition}\label{thm:beta_continuous}
The path $\beta$ is continuous in the sense that for every sequence $t_n \to t$ such that $\beta(t_n)$ is on some excursion, the limit $\lim_{n \to \infty} \beta(t_n)$ exists.\\
(Equivalently, $\beta$ can be extended to a continuous function from $[0,\infty[$ to $\barH$.)
\end{proposition}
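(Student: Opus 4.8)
The plan is to show that if $\beta$ fails to be continuous at some time $t$, then one can extract a "bad" configuration that violates either the finiteness of large excursions or the local growth property. Concretely, suppose $t_n \to t$ with $\beta(t_n)$ on excursions but $\beta(t_n)$ does not converge. Since $\beta$ is bounded on finite time intervals, we may pass to subsequences and assume $\beta(t_n) \to a$ and $\beta(t_n') \to b$ along two sequences of times both converging to $t$, with $a \neq b$. The strict monotonicity of $(K_t)$ combined with the local growth property will be the engine: it forces a chain of small crosscuts around $K_t$ at nearby times, and I would use these to "trap" the relevant excursions.

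First I would reduce to the case where $a, b$ are both boundary points or handle the interior case directly: if infinitely many of the $\beta(t_n)$ lie on a single excursion $\tilde\beta$, then continuity of $\tilde\beta$ on its open time interval already gives convergence unless $t$ is an endpoint of $\tilde\beta$, in which case the limit is the corresponding boundary point $\beta(t_1-)$ or $\beta(t_2+)$ by hypothesis. So the genuinely dangerous case is when the $\beta(t_n)$ lie on infinitely many distinct excursions $\tilde\beta_n$. By the finiteness assumption, $\diam \tilde\beta_n \to 0$ is false only finitely often, so actually $\diam \tilde\beta_n \to 0$, which already forces $a \in \RR$ (and likewise $b \in \RR$). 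Then I would invoke the observation recorded just before the proposition: $K_t \cap \RR$ is an interval, and a point $z \in \HH$ lies in $K_t$ iff it is on or separated from $\infty$ by some excursion before time $t$.

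The key step is then to use the local growth property at time $t$ (or at times slightly before/after $t$): for each $\varepsilon > 0$ there is $\delta > 0$ and a crosscut of $\HH \setminus K_{t'}$ of length $< \varepsilon$ separating $K_{t'+\delta} \setminus K_{t'}$ from $\infty$, uniformly for $t'$ in a neighbourhood of $t$. The excursions $\tilde\beta_n$ (for $n$ large, so that their times are within $\delta$ of $t$) are contained in the bounded component cut off by such a crosscut, hence have diameter controlled by the diameter of that crosscut's enclosed region, which I would bound via the Beurling/length–area type estimate (as in the diameter bound $\diam g_t(\gamma[t,t+\delta]) \le c\sqrt{\diam \gamma[t,t+\delta]}$ cited in the outline) transported back to the $\HH$-side. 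This shrinking neighbourhood pins all the $\tilde\beta_n$ near a single point of $K_t \cap \RR$ — namely the point to which the crosscut endpoints converge — forcing $a = b$, a contradiction.

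The main obstacle I anticipate is making the "crosscut traps the excursion near one point" argument precise when $K_t$ is complicated (space-filling, infinitely many self-intersections): a priori a short crosscut of $\HH \setminus K_{t'}$ separating a later increment from $\infty$ need not have small diameter in $\barH$, only small length, and its enclosed region could in principle be geometrically large. I would handle this exactly as in \cite[Lemma 4.5]{LMR10} / the outline: pass through the uniformising map $g_{t'}$, where short crosscuts of $\HH \setminus K_{t'}$ become small-diameter arcs near the driving point $\xi(t')$, control $\diam g_{t'}(\tilde\beta_n)$ there, and pull back using that $g_{t'}^{-1}$ is uniformly continuous on the relevant compact region (here continuity of the as-yet-hypothetical $\gamma_t$ is not needed — only that $\overline{g_{t'}(\tilde\beta_n)}$ is connected, which holds since $\tilde\beta_n$ is an arc avoiding $K_{t'}$ except at its two $\RR$-endpoints). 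Combining this with right-continuity of $\beta$ at excursion times and the fact that $\beta$ restricted to each excursion is already continuous should close the argument and yield the claimed continuous extension to $[0,\infty[ \to \barH$.
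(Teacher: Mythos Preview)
Your reduction is sound up to the point where you have two subsequential limits $a,b\in\RR$ and excursions $\tilde\beta_n$ with $\diam\tilde\beta_n\to 0$. The gap is in the ``key step'': you propose to trap the $\tilde\beta_n$ near a single boundary point by mapping through $g_{t'}$ and pulling back via $g_{t'}^{-1}$. For right limits ($t_n\searrow t$) this works with $t'=t$ fixed, once one knows $K_t$ is locally connected (the paper proves this separately). But for left limits you are forced to let $t'\nearrow t$: given $\varepsilon$, the local growth property supplies $\delta=\delta(\varepsilon)$ and you need $t'\in[t-\delta,t)$. Your conclusion then reads ``$\tilde\beta_n$ is within $\eta_{t'}(\varepsilon)$ of $g_{t'}^{-1}(\xi(t'))$'' where $\eta_{t'}$ is the modulus of continuity of $g_{t'}^{-1}$ at $\xi(t')$. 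Nothing in your argument controls $\eta_{t'}(\varepsilon)$ uniformly as $t'\to t$, nor shows that the target points $g_{t'}^{-1}(\xi(t'))$ converge. The Belyaev-type configuration in the paper's \cref{fig:trace_not_continuous} is exactly the scenario where this blows up: each $K_{t'}$ is locally connected, each $g_{t'}^{-1}$ is continuous, yet the moduli degenerate and the ``trace'' oscillates between two boundary points. You correctly note that that example violates the finitely-many-large-excursions hypothesis, but your argument only invokes that hypothesis to get $\diam\tilde\beta_n\to 0$; it never re-enters to supply the missing uniformity.

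The paper closes this gap by a different mechanism. It introduces the notion of $h$-sides of a hull and proves (\cref{thm:side_not_crossed}, built on \cref{thm:cut_off_sides,thm:crosscut_intersect_new_points,thm:beta_continues_on_one_side,thm:beta_continues_on_same_side}) that once all excursions occurring in $[s,t]$ have diameter below $h$, the path $\beta[s,t]$ stays on a single $h$-side of $K_s$. This is a purely topological ``no side-switching'' statement that uses the local growth property together with Janiszewski's theorem, and it does not pass through any uniform estimate on $g_{t'}^{-1}$. The proof of \cref{thm:beta_continuous} then places, for any candidate separation point $x$ between $a$ and $b$, a past large excursion near $x$ and observes that $\beta$ near time $t$ cannot lie on both of its sides; this forces $a=b$. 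Matching the right limit to the left limit is handled by \cref{thm:beta_continues_on_same_side}. In short, the missing idea in your proposal is this side-preservation lemma; the $g_{t'}^{-1}$ route does not by itself yield the Cauchy property for left limits.
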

Note that from our assumptions on $\beta$, it does not make sense to specify $\beta(t)$ at times $t$ where $\beta$ is not on any excursion.

\begin{lemma}\label{thm:beta_locally_connected}
For each $t \ge 0$, $K_t$ is locally connected.
\end{lemma}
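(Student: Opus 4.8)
The plan is to use the standard characterization of local connectedness of $\overline{H_t}$ (equivalently, of $K_t$, since $K_t = \barH \setminus H_t$ up to the part of $\RR$ outside $K_t$, and local connectedness is detected through boundary behaviour of the conformal map). Recall that $K_t$ is a compact $\HH$-hull, so $\HH \setminus K_t = H_t$ is simply connected, and $g_t^{-1}\colon \HH \to H_t$ is conformal. By Carathéodory's theorem (in the form that $\overline{H_t}$ is locally connected iff $g_t^{-1}$ extends continuously to $\overline\HH$, equivalently iff $\partial H_t$ has no "crosscuts of small diameter enclosing large area" — more precisely, by the crosscut criterion), it suffices to show that for every $\varepsilon > 0$ there is $\delta > 0$ such that any crosscut $C$ of $H_t$ with $\diam(C) < \delta$ bounds a component of $H_t \setminus C$ of diameter less than $\varepsilon$. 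So the real task is to rule out "small crosscut, large enclosed region" in $H_t = \HH \setminus \fill(\beta[0,t])$.

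The key geometric input is the excursion decomposition together with the assumption that on $[0,t]$ only finitely many excursions have diameter greater than any fixed threshold. First I would fix $\varepsilon > 0$ and let $\tilde\beta_1, \dots, \tilde\beta_N$ be the (finitely many) excursions on $[0,t]$ with diameter $\ge \varepsilon/4$. Each $\tilde\beta_j$ is a continuous path in $\barH$ with both endpoints on $\RR$, hence $\fill(\tilde\beta_j)$ is a compact $\HH$-hull that is locally connected (it is the filling of a single continuous arc, the classical Loewner-curve situation), so there is $\delta_j > 0$ such that crosscuts of $\HH \setminus \fill(\tilde\beta_j)$ of diameter $< \delta_j$ enclose regions of diameter $< \varepsilon$. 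The "small" excursions, of diameter $< \varepsilon/4$, are individually harmless: each such excursion together with its enclosed bounded components has diameter $< \varepsilon/2$, so even if a crosscut cuts across many of them, the contribution of each to the enclosed set is uniformly small. The point is that a crosscut $C$ of diameter $\delta$ can only "interact" with excursions whose diameter is comparable to or larger than $\delta$ in a way that matters, and the finitely many large ones are controlled one at a time.

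The main step, and the expected obstacle, is to turn this into a clean argument using Janiszewski's theorem: given a crosscut $C$ of $H_t$ with $\diam(C)$ small, I would argue that the bounded component $U$ of $H_t \setminus C$ is contained in a bounded neighbourhood of $C$ together with a controlled union of (fillings of) excursions. Concretely, a point $z \in U$ far from $C$ must be separated from $\infty$ (within $\HH$) by $C \cup K_t$; since $z \notin K_t$, it lies in $H_t$, and the separation must be achieved by a chain of excursions hooking around, each of which — by the observation that $z \in K_s$ iff $z$ is enclosed by some excursion up to time $s$ — cannot be one that encloses $z$. Using Janiszewski with $A_1 = C$-plus-small-excursions and $A_2 = $ the large excursions handled via their individual $\delta_j$, one shows the enclosed region cannot escape an $\varepsilon$-ball. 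The delicate bookkeeping is that excursions may be nested and may themselves be non-simple; I would handle this by replacing each excursion by its fill (a genuine hull) from the start and inducting on the (finite) number of large excursions, at each stage absorbing one large excursion into the hull and reducing to the same statement with one fewer large excursion, the base case being $H_t$ with only small excursions, which is handled directly by the diameter bound $\varepsilon/4$ on those excursions and the fact that crosscuts of $\HH$ itself behave trivially. Setting $\delta = \min_j \delta_j$ (adjusted for the finitely many inductive steps) then yields the crosscut criterion and hence local connectedness of $K_t$.
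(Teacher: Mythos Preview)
Your route via the crosscut criterion is different from the paper's, and considerably heavier than needed. The paper verifies local connectedness of $K_t$ directly from the definition, pointwise. For $z \in K_t \cap \HH$ it is immediate: only finitely many excursions can pass through $z$ (each has positive diameter $\ge \Im z$ to reach $z$), so near $z$ the set $K_t$ is a finite union of continuous arcs together with filled regions. For $z \in K_t \cap \RR$, the paper fixes $\delta>0$, lets $K$ be the union of the fillings of the finitely many excursions of diameter $\ge \delta$, uses local connectedness of this finite union to find a connected $A_1 \subseteq K \cap B(z,\delta)$ containing $K \cap B(z,r)$ for some $r>0$, and then simply adjoins to $A_1$ every small excursion (diameter $<\delta$) that comes within distance $r$ of $z$, together with its fill. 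The resulting set $A_2$ is connected, lies in $K_t \cap B(z,\delta+r)$, and what remains of $K_t \cap B(z,r)$ outside $A_2$ consists only of pieces of filled regions, hence connected components of $B(z,r)\setminus A_2$; absorbing those gives the desired small connected neighbourhood. No Janiszewski, no induction, no crosscuts.

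Your plan shares the key idea of separating large from small excursions, but the crosscut formulation forces you into a global argument (controlling the diameter of the enclosed region of an arbitrary small crosscut) where the paper only needs a local one. The step you flag as delicate---the Janiszewski/induction argument that a small crosscut together with small excursions cannot trap a large region once the large excursions are peeled off---is genuinely where the work would lie, and your sketch does not really carry it out: you would need to say precisely what statement is being inducted on, and check that ``absorbing one large excursion into the hull'' preserves both the crosscut (it may terminate on the absorbed excursion) and the bound on the enclosed region. It is fillable, but the direct pointwise construction avoids all of this. Also note a minor slip: you need local connectedness of $K_t$ itself, not just of $\partial H_t$; the equivalence (for a planar continuum with connected complement) is standard but should be stated, since the crosscut criterion only gives the latter.
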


\begin{proof}
For $z \in \HH$, $K_t$ is clearly locally connected at $z$ since only finitely many excursions intersect $z$.

For $z \in \RR$, let $\delta > 0$. There are only finitely many excursions of diameter at least $\delta$ until time $t$. Call $K$ the union of the fillings of these excursions. Then there exists a connected set $A_1 \subseteq K \cap B(z,\delta)$ that contains $K \cap B(z,r)$ for some $r > 0$. Consider the set
\[ A_2 \defeq A_1 \cup \bigcup \{ \fill(\tilde\beta) \mid \tilde\beta \text{ is an excursion with } \diam \tilde\beta < \delta \text{ and } \dist(z,\tilde\beta) < r \} \]
which is a connected set contained in $K_t \cap B(z,\delta+r)$. Then $K_t \cap B(z,r) \setminus A_2$ can only consist of connected components of $B(z,r) \setminus A_2$ since all excursions that intersect $B(z,r)$ have been included in $A_2$. Therefore $A_3 \defeq (K_t \cap B(z,r)) \cup A_2 = A_2 \cup (K_t \cap B(z,r) \setminus A_2)$ is a connected set within $K_t \cap B(z,\delta+r)$ that contains $K_t \cap B(z,r)$. This shows local connectedness at $z$.
\end{proof}

\begin{lemma}\label{thm:lc_finitely_many_ball_components}
Let $D \subseteq \hatC$ be a domain with locally connected boundary. Let $z \in \CC$ and $0 < r_1 < r_2$. Then only finitely many components of $D \cap B(z,r_1)$ are disconnected in $D \cap B(z,r_2)$.
\end{lemma}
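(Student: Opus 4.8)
\emph{Approach.} Write $U_i := D \cap B(z,r_i)$ for $i=1,2$. For a component $C$ of $U_1$, I read ``$C$ is disconnected in $U_2$'' as meaning that the component $\widehat C$ of $U_2$ containing $C$ also meets $U_1 \setminus C$, i.e. $C$ gets joined to another component of $U_1$ inside $U_2$. The plan is a proof by contradiction: assuming infinitely many such components, I produce infinitely many ``channels'' leaving $B(z,r_1)$ that accumulate at a point of $\partial D$, and show that local connectedness of $\partial D$ there forbids this. The two classical inputs are: $D$ is locally connected (being open), and a Peano continuum has only finitely many complementary components of diameter above any threshold --- which I will apply to small connected pieces $V$ of $\partial D$, using that $\partial D$ is locally connected, to obtain, for each $w \in \partial D$ and $\varepsilon > 0$, a connected relatively open neighbourhood $V \ni w$ in $\partial D$ with $\diam V < \varepsilon$ and a $\delta > 0$ with $\partial D \cap B(w,\delta) \subseteq V$.

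\emph{Steps 1--2 (channels and accumulation).} Suppose there are distinct components $C_1,C_2,\dots$ of $U_1$, each disconnected in $U_2$, with containing components $\widehat C_k$ of $U_2$. Since $\widehat C_k$ is open and connected, hence path connected, and meets $C_k$ and some other component $C_k'$ of $U_1$, it contains a path $\pi_k$ from $C_k$ to $C_k'$; as distinct components of $U_1$ cannot be joined inside $B(z,r_1)$, this path meets $\partial B(z,r_1)$, and (using that $D$ is open) one extracts a subarc $\eta_k\subseteq\widehat C_k$ with endpoints $a_k\in\overline{C_k}\cap\partial B(z,r_1)$, $b_k\in\overline{C_k'}\cap\partial B(z,r_1)$, both in $D$, at each of which $\eta_k$ leaves $\overline{B(z,r_1)}$ immediately; here $a_k \ne b_k$ and indeed each of $a_k,b_k$ lies on the closure of only one component of $U_1$, since a point of $D$ common to the closures of two components of $U_1$ would, via a small lens $B(\cdot,\sigma)\cap B(z,r_1)\subseteq U_1$, join those components inside $B(z,r_1)$. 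Passing to a subsequence, $a_k\to a^\ast$ and $b_k\to b^\ast$ on $\partial B(z,r_1)$. If $a^\ast\in D$, the lens $B(a^\ast,\sigma)\cap B(z,r_1)$ (for small $\sigma$ with $B(a^\ast,\sigma)\subseteq D$) is a connected subset of $U_1$ meeting $C_k$ for all large $k$, forcing those $C_k$ into one component of $U_1$ --- contradicting distinctness. Hence $a^\ast\in\partial D$, and likewise $b^\ast\in\partial D$.

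\emph{Step 3 (the crux).} It remains to derive a contradiction from the existence of infinitely many distinct components $C_k$ of $U_1$ hanging off $\partial B(z,r_1)$ at points $a_k\to a^\ast\in\partial D$, $b_k\to b^\ast\in\partial D$, each joined to another component $C_k'$ through a channel $\eta_k$ outside $\overline{B(z,r_1)}$ but inside $B(z,r_2)$. The point is that the portion of $\partial D$ that separates $C_k$ from the other components of $U_1$ \emph{inside} $B(z,r_1)$ is, for large $k$, confined (near its ``entrance'' on $\partial B(z,r_1)$) to the fixed small connected neighbourhoods $V_a\ni a^\ast$ and $V_b\ni b^\ast$ in $\partial D$ supplied by the preliminary fact. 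Since $V_a$ and $V_b$ are \emph{fixed} connected sets, Janiszewski's theorem (splicing along $V_a$, resp.\ $V_b$) lets one combine the separating configurations of two large indices $k\ne\ell$, re-routing the channels $\eta_k,\eta_\ell$ together with these fixed pieces into a connected subset of $D\cap B(z,r_2)$ joining $C_k$ to $C_\ell$. When the $\widehat C_k$ are distinct this contradicts their being components of $U_2$; and in the remaining case that all $C_k$ lie in one $\widehat C$, the same re-routing (now exploiting that it can be kept inside $B(z,r_1)$ once the channels are collapsed onto $V_a,V_b$) joins $C_k$ to $C_\ell$ inside $B(z,r_1)$, contradicting distinctness of the $C_k$. \emph{This last step is the main obstacle}: it is exactly the place where one must rule out the Belyaev-type pathology of infinitely many large excursions, and the delicate part is verifying rigorously that local connectedness of $\partial D$ at the (finitely many) accumulation points of the $a_k$ and $b_k$ reduces the infinitely many separating boundary pieces to finitely many fixed connected ones --- the honest engine being Janiszewski together with the fact that the complementary components of a Peano continuum shrink in diameter.
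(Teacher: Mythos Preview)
Your reading of the statement is inverted. In the paper, ``components of $U_1$ disconnected in $U_2$'' means components lying in \emph{distinct} components of $U_2$; the claim is that only finitely many components of $D\cap B(z,r_2)$ meet $B(z,r_1)$. This is exactly what the downstream pigeonhole application needs (pick a subsequence of points lying in a single $U_2$-component), and it is what the paper's own proof establishes: it assumes infinitely many points $z'\in U_1$ pairwise in different $U_2$-components, so that polygonal paths $\alpha_{z'}\subseteq D$ from each $z'$ to a fixed $z_0\in D\setminus\overline{B(z,r_2)}$ have pairwise distinct first hits $w_{z'}$ of the intermediate circle $\partial B(z,r')$. These accumulate at some $w_0\in\partial D$; local connectedness there, combined with the Jordan curve theorem and the minimality of the number of circle-crossings of the $\alpha_{z'}$, forces two of the paths to share one of at most two arcs of a small circle around $w_0$, a contradiction. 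Your interpretation --- that $C$ gets \emph{merged} with another $U_1$-component inside $U_2$ --- is the opposite phenomenon and is not what is being asserted.

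Even on its own terms your argument is not a proof. Step~3 is flagged by you as ``the main obstacle'' and is only a heuristic: the Janiszewski ``re-routing'' is never carried out, and as written it cannot work, since the connected neighbourhoods $V_a,V_b$ you splice along lie in $\partial D$, not in $D$, so they cannot serve as connectors inside $D\cap B(z,r_2)$ (or inside $B(z,r_1)$). The honest engine you allude to --- that complementary components of a Peano continuum shrink --- is also not invoked in any concrete way.
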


\begin{proof}
Let $r' \defeq \frac{r_1+r_2}{2}$. If $D \subseteq B(z,r_2)$, there is nothing to prove. Therefore we can suppose there is some $z_0 \in D \setminus \overline{B(z,r_2)}$. For every $z' \in D \cap B(z,r_1)$ we can find a simple polygonal path $\alpha_{z'}$ in $D$ from $z'$ to $z_0$. Note that such paths hit any circle only finitely many times. Pick $\alpha_{z'}$ such that it hits $\partial B(z,r')$ as few times as possible.

Suppose that there exist infinitely many $z' \in D \cap B(z,r_1)$ that are disconnected in $D \cap B(z,r_2)$. Let $A$ be an infinite set of such $z'$. For $z' \in A$ the paths $\alpha_{z'}$ are all disjoint in $B(z,r_2)$. Denote by $w_{z'}$ the first hitting point of $\alpha_{z'}$ with $\partial B(z,r')$. Then $B = \{ w_{z'} \mid z' \in A \}$ is an infinite set and hence has a limit point $w_0 \in \partial B(z,r')$.

Clearly $w_0 \in \partial D$ since all points in $B$ are disconnected in $D \cap B(z,r_2)$ by construction. Since $\partial D$ is locally connected, we can find a connected set $C \subseteq \partial D \cap B(w_0,r'-r_1)$ that contains $\partial D \cap B(w_0,2\delta)$ for some $\delta > 0$. Then each two points in $D$ that are connected in $B(w_0,2\delta) \setminus C$ are also connected in $D$. Let $w_{z'} \in B \cap B(w_0,\delta)$. We claim that $\alpha_{z'}$ needs to pass a segment of $\partial B(w_0,\delta) \setminus C$ that intersects $\partial B(z,r')$. This gives us the desired contradiction since there are only two such segments but infinitely many points in $B \cap B(w_0,\delta)$.

Note that $\alpha_{z'}$ needs to enter $B(w_0,\delta)$ through a segment $S$ of $\partial B(w_0,\delta) \setminus C$ before passing $w_{z'}$. We show below that it needs to cross $S$ again. If $S$ does not intersect $\partial B(z,r')$, then $w_{z'}$ is an unnecessary crossing of $\partial B(z,r')$ which contradicts our construction.

Suppose that $\alpha_{z'}$ does not pass $S$ again, which implies that it crosses $S$ an odd number of times. Let $\zeta_1, \zeta_2 \in \partial B(w_0,\delta) \cap C$ be the endpoints of $S$. We show that $\zeta_1$ and $\zeta_2$ cannot be connected in $C$ which contradicts the connectedness of $C$. Consider the segment of $\alpha_{z'}$ from when it last enters $B(w_0,r'-r_1)$ until it next leaves $B(w_0,r'-r_1)$ (these times exist since $\alpha_{z'}$ begins inside $B(z,r_1)$ and ends outside $B(z,r_2)$), followed by an arc of $\partial B(w_0,r'-r_1)$. The Jordan curve theorem then implies that any set that connects $\zeta_1$ and $\zeta_2$ in $\hatC \setminus \alpha_{z'}$ needs to intersect $\partial B(w_0,r'-r_1)$. But $C$ cannot do this because $C \subseteq B(w_0,r'-r_1)$.
\end{proof}

Intuitively, the local growth property implies that $\beta$ might touch but not cross itself again. In particular, it cannot cross any of its past excursions. We make this more precise in the following.

For $h > 0$, we write $\mathcal{S}_h \defeq \{ z \in \CC \mid \Im z \in {]0,h[} \}$.

Let $K \subseteq \barH$ be a compact $\HH$-hull and $h > 0$. We say that two points in $\mathcal{S}_h \setminus K$ are \emph{on the same $h$-side of} $K$ if they are connected in $\mathcal{S}_{h'} \setminus K$ for every $h' > h$. See \cref{fig:hull_sides} for an illustration of this definition.

Note that if $h$ is smaller than the height of $K$, then $K$ has at least two $h$-sides (a left and a right side). If $K_1 \subseteq K_2$, then points on the same $h$-side of $K_2$ are also on the same $h$-side of $K_1$.

\begin{figure}[h]
	\centering
	\includegraphics[width=0.48\textwidth]{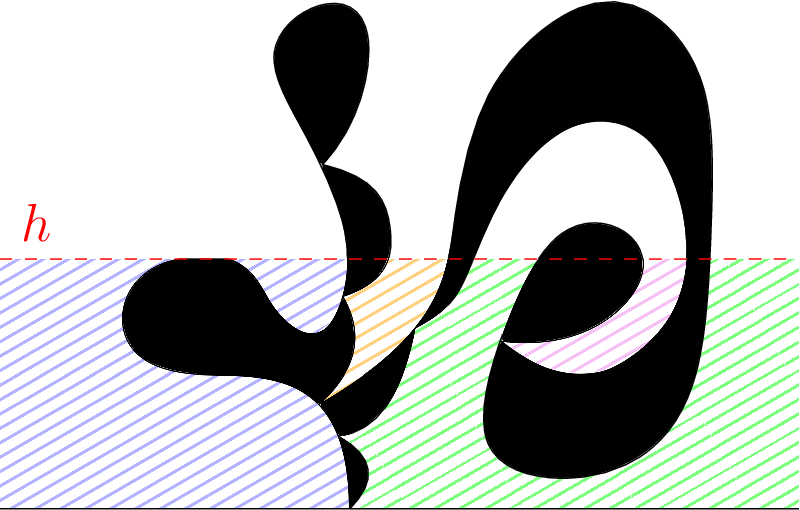}
	\caption{A hull with four $h$-sides.}
	\label{fig:hull_sides}
\end{figure}

\begin{lemma}\label{thm:cut_off_sides}
Let $K \subseteq \barH$ be a compact $\HH$-hull, and $h>0$. Fix two different $h$-sides $S_1,S_2$ of $K$. Then there exists $\delta > 0$ with the following property:

If $C$ is a crosscut in $\HH \setminus K$ such that there exist $z_1 \in S_1$ and $z_2 \in S_2$ that both are separated from $\infty$ by $C$, then $\diam C \ge \delta$.
\end{lemma}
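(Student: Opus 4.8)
The plan is to argue by contradiction with a limiting argument that ``squeezes'' the height at which the two sides come together. Suppose no such $\delta$ exists. Then for each $n$ there is a crosscut $C_n$ of $\HH\setminus K$ with $\diam C_n<1/n$ together with points $z_1^n\in S_1$ and $z_2^n\in S_2$ lying in the component $U_n$ of $(\HH\setminus K)\setminus C_n$ that is separated from $\infty$; write $W_n$ for the component ``containing'' $\infty$. Passing to a subsequence I may assume $\overline{C_n}$ converges in $\hatC$ to a single point $c^*$ (legitimate since $\diam C_n\to0$), and since the endpoints of $C_n$ lie on $\partial(\HH\setminus K)$ one checks $c^*\in\partial(\HH\setminus K)\subseteq\RR\cup\partial K\cup\{\infty\}$.

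Next I would collect two elementary facts. (i) \emph{Fixed points escape.} For any fixed $a\in\HH\setminus K$ one can choose a fixed path $\alpha$ from $a$ to $\infty$ inside $\HH\setminus K$ (go up beyond the height of $K$ and then straight up) that stays away from $c^*$; as $\overline{C_n}\to\{c^*\}$, for all large $n$ the path $\alpha$ is disjoint from $C_n$, so $a\in W_n$. (ii) \emph{The merging height.} Since $S_i$ is an $h$-side, for every level $h_0>h$ all of $S_i$ lies in a single component $V_i(h_0)$ of $\mathcal S_{h_0}\setminus K$, and $V_1(h_0)\neq V_2(h_0)$ for every $h_0<h_*$, where $h_*\defeq\sup\{h_0>h:S_1,S_2\text{ lie in different components of }\mathcal S_{h_0}\setminus K\}$. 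Because $S_1$ and $S_2$ are \emph{different} $h$-sides we have $h_*>h$, so the interval $]h,h_*[$ is non-degenerate — this is exactly the room the argument exploits.

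The key step is the claim that, for every $h_0\in{]h,h_*[}$ and all large $n$, the crosscut $C_n$ meets both $V_1(h_0)$ and $V_2(h_0)$, and hence contains a point of imaginary part $\ge h_0$. Indeed $V_i:=V_i(h_0)$ is connected and contains $z_i^n\in U_n$; by (i) it also contains a point of $W_n$ for large $n$; since $V_i$ is connected it must therefore meet $C_n$. But a connected subset of $\HH\setminus K$ meeting two distinct components $V_1,V_2$ of $\mathcal S_{h_0}\setminus K$ cannot be contained in $\mathcal S_{h_0}$, so $C_n$ reaches imaginary part $\ge h_0$.

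To conclude: fix any $h_0\in{]h,h_*[}$. By the claim $\overline{C_n}$ has a point of height $\ge h_0$, and since $\diam C_n\to0$ eventually \emph{all} of $\overline{C_n}$ sits at height $>h_0/2$; as the endpoints of $C_n$ lie on $\partial(\HH\setminus K)$ they must then lie on the compact set $\partial K$, which forces $c^*\in\partial K$, in particular $c^*\neq\infty$. (Had $c^*=\infty$, the endpoints of $C_n$ would eventually lie on $\RR$ while $C_n$ reaches height $\ge h_0$, giving $\diam C_n\ge h_0$, absurd.) Now apply the claim with $h_0$ varying in $]h,h_*[$: each $C_n$ contains a point of $V_1(h_0)\subseteq\mathcal S_{h_0}$, so $c^*\in\overline{V_1(h_0)}$ gives $\Im c^*\le h_0$; letting $h_0\downarrow h$ yields $\Im c^*\le h$. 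Each $C_n$ also contains a point of height $\ge h_0$, so $\Im c^*\ge h_0$; letting $h_0\uparrow h_*$ yields $\Im c^*\ge h_*$. Since $h_*>h$ this is a contradiction, proving the lemma. I expect the genuinely tricky part to be spotting this squeeze (and phrasing (i) so that it also covers the degenerate possibility $c^*=\infty$, which needs a vertical escape path); everything else is bookkeeping, and in particular neither the conformal map $g_K$ nor Janiszewski's theorem appears to be needed for this statement.
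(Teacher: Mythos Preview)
Your argument is essentially correct and takes a different route from the paper's. The paper proceeds directly: it fixes $h'' \in {]h,h'[}$ with $S_1,S_2$ disconnected in $\mathcal S_{h'}\setminus K$, chooses for each side a fixed escape path $\alpha_i$ from a point of $S_i$ to $\infty$, and sets $\delta=\dist(\alpha_1,\partial K)\wedge\dist(\alpha_2,\partial K)\wedge(h'-h'')$. The mechanism is that a crosscut of diameter $<\delta$ separating a point of $S_i$ from $\infty$ must contain a point of the $\mathcal S_{h''}$-component of $S_i$; then $C$ (minus endpoints) lies entirely in $\mathcal S_{h'}\setminus K$ yet meets both components there, which is impossible. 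You instead run a contradiction/compactness argument: extract a limit point $c^*$ of shrinking crosscuts and squeeze, obtaining $\Im c^*\le h_0$ (from $C_n\cap V_1(h_0)\neq\varnothing$) and $\Im c^*\ge h_0$ (from $C_n$ having to leave $\mathcal S_{h_0}$) for every $h_0\in{]h,h_*[}$. The underlying idea---a short crosscut meeting both sides must simultaneously stay low and reach high---is the same; the paper's packaging is more direct and yields an explicit $\delta$, while yours is a clean limit argument. Incidentally, the final ``letting $h_0\downarrow h$, $h_0\uparrow h_*$'' is more than you need: since the threshold in (i) can be taken uniform in $h_0$ (choose $a_i\in S_i\subseteq V_i(h_0)$ once and for all), any two distinct values of $h_0$ already force $\Im c^*$ to equal both.

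One point to tighten: your treatment of $c^*=\infty$ is circular. You invoke the key claim (``$C_n$ reaches height $\ge h_0$'') to rule out $c^*=\infty$, but the claim rests on (i), and your justification of (i)---a vertical path that ``stays away from $c^*$''---fails exactly when $c^*=\infty$. The fix is to dispose of $c^*=\infty$ \emph{first}: if $\overline{C_n}\to\{\infty\}$ with $\diam C_n\to 0$, then eventually $C_n$ is disjoint from the compact set $\overline K$, so $C_n$ is a crosscut of $\HH$ with both endpoints on $\RR$, and its bounded component $U_n$ satisfies $\diam U_n\le\diam C_n$ and $U_n\to\infty$; for large $n$ this gives $U_n\subseteq\mathcal S_h\setminus K$ connected, and since $z_1^n,z_2^n\in U_n$ they lie on the same $h$-side, contradicting $S_1\neq S_2$. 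With $c^*\in\CC$ established, your (i), the claim, and the squeeze go through as written.
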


\begin{proof}
Since $S_1$ and $S_2$ are different $h$-sides of $K$, there exists $h' > h$ such that they are disconnected in $\mathcal{S}_{h'} \setminus K$.

Let $h'' \in {]h,h'[}$. By definition, all points in $S_1$ are connected in $\mathcal{S}_{h''} \setminus K$. Pick any $z \in S_1$. Since $\HH \setminus K$ is a domain, there exists a path $\alpha$ in $\HH \setminus K$ from $z$ to a neighbourhood of $\infty$. Therefore any crosscut that separates $z$ from $\infty$ needs to cross $\alpha$. It follows that any crosscut that separates some point in $S_1$ from $\infty$ needs to cross either $\alpha$ or some point connected to $S_1$ in $\mathcal{S}_{h''} \setminus K$. Let $\delta_1 \defeq \dist(\alpha,\partial K) > 0$. Then any crosscut with diameter smaller than $\delta_1$ that separates some point in $S_1$ from $\infty$ needs to contain some point connected to $S_1$ in $\mathcal{S}_{h''} \setminus K$. Similarly, there is $\delta_2$ such that the analogous statement is true for $S_2$.

Now let $\delta \defeq \delta_1 \wedge \delta_2 \wedge (h'-h'')$. If $C$ is a crosscut in $\HH \setminus K$ with $\diam C < \delta$ and separates points both in $S_1$ and $S_2$ from $\infty$, then $C$ minus its endpoints is a connected set in $\mathcal{S}_{h'} \setminus K$ that contains two points connected to $S_1$ resp. $S_2$ in $\mathcal{S}_{h''} \setminus K$. But this is impossible since $S_1$ and $S_2$ are separated in $\mathcal{S}_{h'} \setminus K$.
\end{proof}

\begin{corollary}\label{thm:crosscut_intersect_new_points}
Let $K \subseteq \barH$ be a compact $\HH$-hull, and $h>0$. Fix two different $h$-sides $S_1,S_2$ of $K$. Then there exists $\delta > 0$ with the following property:

If $K' \supseteq K$ is a compact $\HH$-hull and $C$ is a crosscut in $\HH \setminus K'$ with $\diam C < \delta$ such that there exist $z_1 \in S_1 \setminus K'$ and $z_2 \in S_2\setminus K'$ that both are separated from $\infty$ by $C$, then $C$ intersects $K' \setminus K$.
\end{corollary}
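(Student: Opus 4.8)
The plan is to prove the contrapositive and reduce to \cref{thm:cut_off_sides}. Fix the constant $\delta>0$ that \cref{thm:cut_off_sides} provides for $K$, $h$ and the sides $S_1,S_2$. Suppose $K'\supseteq K$ is a compact $\HH$-hull and $C$ is a crosscut of $\HH\setminus K'$ with $\diam C<\delta$ that separates some $z_1\in S_1\setminus K'$ and $z_2\in S_2\setminus K'$ from $\infty$; we must show that $C$ meets $K'\setminus K$. Assuming it does not, and noting that an endpoint of $C$ lying in $K'\setminus K$ would already suffice, we may assume $\overline C\cap(K'\setminus K)=\emptyset$ and aim for a contradiction.

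The first (routine) step is to observe that $C$ is then also a crosscut of $\HH\setminus K$: its open arc lies in $\HH\setminus K'\subseteq\HH\setminus K$, and each endpoint $p_i\in\partial(\HH\setminus K')\subseteq\RR\cup\partial K'$ also lies on $\partial(\HH\setminus K)$. Indeed, if $p_i\in\RR$ this is clear, and if $p_i\in\partial K'$ then the standing assumption forces $p_i\in K$, hence $p_i\in\partial K$, since an interior point of $K$ would be interior to $K'$.

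The heart of the argument is to upgrade the separation statement from $\HH\setminus K'$ to $\HH\setminus K$. Let $W_0$ and $W_\infty$ be the two components of $(\HH\setminus K')\setminus C$, with $W_\infty$ the unbounded one, so that $z_1,z_2\in W_0$ by hypothesis; and let $V_0$, $V_\infty$ (the unbounded one) be the two components of $(\HH\setminus K)\setminus C$. Since $W_\infty$ and $V_\infty$ both contain every point of $\HH\setminus K'$ far from the origin, $W_\infty\subseteq V_\infty$; and since $W_0$ is connected and disjoint from $C$, it lies wholly in $V_0$ or wholly in $V_\infty$. If $W_0\subseteq V_0$, then $z_1\in S_1$ and $z_2\in S_2$ are separated from $\infty$ by $C$ in $\HH\setminus K$ as well, so \cref{thm:cut_off_sides} forces $\diam C\ge\delta$, contradicting $\diam C<\delta$. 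If instead $W_0\subseteq V_\infty$, then $(\HH\setminus K')\setminus C=W_0\cup W_\infty\subseteq V_\infty$, so $V_0$ is disjoint from $\HH\setminus K'$ while lying in $\HH\setminus K$, whence $V_0\subseteq K'\setminus K$. But the closure of a crosscut is contained in the closure of each of the two components it bounds, so $\overline C\subseteq\overline{V_0}\subseteq K'$, which is absurd since the arc $C$ lies in $\HH\setminus K'$. Either way we reach a contradiction, so $C$ must meet $K'\setminus K$.

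I expect this last dichotomy to be the only genuine obstacle. Intuitively, shrinking the hull from $K'$ to $K$ could ``open a channel'' through $K'\setminus K$ and let the inside region $W_0$ of $C$ escape to $\infty$, thereby breaking the separation; the observation that rules this out is that in such a scenario the bounded side $V_0$ of $C$ relative to $K$ would have to be buried inside $K'\setminus K$, which contradicts $C$ lying outside $K'$. The remaining ingredients --- that a crosscut splits a simply connected domain into exactly two components and lies in the closure of each, together with the elementary boundary bookkeeping of the first step --- are standard.
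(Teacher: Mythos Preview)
Your proof is correct and follows the same overall strategy as the paper: take the $\delta$ from \cref{thm:cut_off_sides}, assume $C$ avoids $K'\setminus K$, observe that $C$ is then a crosscut of $\HH\setminus K$, and derive a contradiction by showing that $C$ still separates $z_1,z_2$ from $\infty$ in $\HH\setminus K$. The only difference is in how this last separation claim is established: the paper does it in one line via Janiszewski's theorem (applied to the closed sets $C\cup K\cup\hat\RR$ and $K'\cup\hat\RR$, whose intersection $K\cup\hat\RR$ is connected), whereas you argue by a direct component dichotomy, ruling out the case $W_0\subseteq V_\infty$ by noting it would force $V_0\subseteq K'\setminus K$ and hence $\overline C\subseteq K'$. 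Your route is self-contained and avoids citing Janiszewski; the paper's is shorter once that theorem is on the table.
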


\begin{proof}
Choose $\delta$ as in \cref{thm:cut_off_sides}. If $C$ does not intersect $K' \setminus K$, then $C$ is also a crosscut in $\HH \setminus K$. We claim that $C$ separates $z_1$, $z_2$ from $\infty$ also in $\HH \setminus K$ which is a contradiction to $\diam C < \delta$.

Suppose $C \cup K \cup \hat\RR$ does not separate $z_1$ from $\infty$. Since $K' \cup \hat\RR$ does not separate $z_1$ from $\infty$ either and $(C \cup K \cup \hat\RR) \cap (K' \cup \hat\RR) = K \cup \hat\RR$ is connected (recall that we assumed $C \cap K' \subseteq K$), by Janiszewski's theorem $(C \cup K \cup \hat\RR) \cup (K' \cup \hat\RR) = C \cup K' \cup \hat\RR$ would not separate $z_1$ from $\infty$, which contradicts our assumption. The argumentation for $z_2$ is the same.
\end{proof}

\begin{figure}[h]
	\centering
	\includegraphics[width=0.48\textwidth]{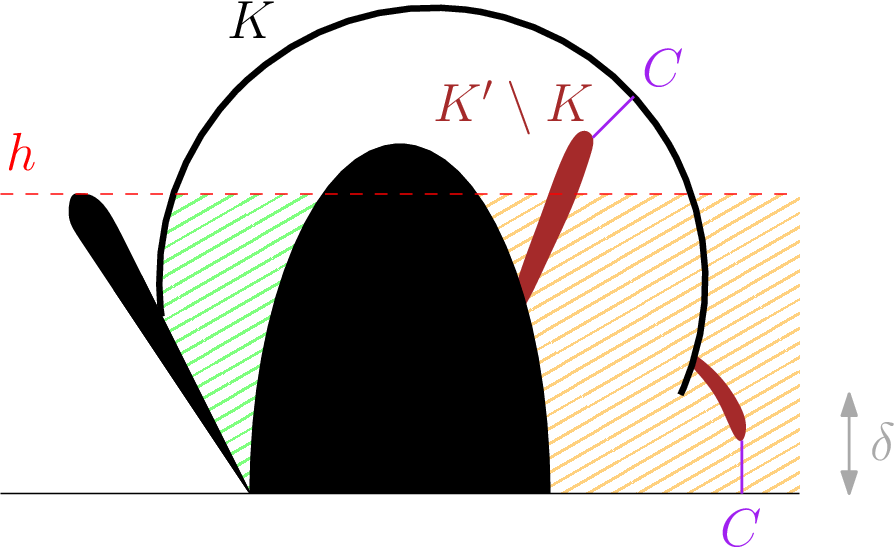}
	\caption{The situation in \cref{thm:crosscut_intersect_new_points}.}
\end{figure}

We say that an excursion $\tilde\beta \in C([t_1,t_2];\barH)$ \emph{occurs within a time interval} $[s,t] \subseteq \RR$ if ${]t_1,t_2[} \cap [s,t] \neq \varnothing$.

Let $K \subseteq \barH$ be a compact $\HH$-hull and $h > 0$. We say that $\beta[s,t]$ is \emph{on one $h$-side of} $K$ if all points of $\beta[s,t] \cap (\HH \setminus K)$ lie on the same $h$-side of $K$.

\begin{lemma}\label{thm:beta_continues_on_one_side}
Let $t \ge 0$ and $h>0$. If $\Im \beta(t) < h$, then $\beta[t,t+\varepsilon]$ is on one $h$-side of $K_t$ for some $\varepsilon > 0$.
\end{lemma}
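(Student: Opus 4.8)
The strategy is to argue by contradiction, extracting a small crosscut from the local growth property and applying \cref{thm:cut_off_sides}. I need two preliminary facts. First, that there is $\varepsilon_0>0$ with $\beta[t,t+\varepsilon_0]\cap H_t\subseteq\mathcal{S}_h$: this uses $\Im\beta(t)<h$ together with the excursion structure of $\beta$. Only finitely many excursions of $\beta$ on $[0,t+1]$ have diameter $\ge h$, hence only finitely many reach height $\ge h$ (each such excursion also touches $\RR$); a short case analysis on the position of $\beta$ at time $t$ (in the interior of an excursion, at the start of one, or at the end of one) then produces such an $\varepsilon_0$. Second, that $K_t$ has only finitely many $h$-sides: the finitely many excursions of $\beta[0,t]$ of diameter $>h$ are the only ones reaching height $>h$, every other part of $K_t$ (the remaining excursions and the interval $K_t\cap\RR$) lies below height $h$, and a continuous path performs only finitely many oscillations of a given amplitude, so across a strip $\mathcal{S}_{h'}$ with $h'>h$ close to $h$ there are only finitely many ``barriers'', i.e.\ only finitely many components of $\mathcal{S}_{h'}\setminus K_t$. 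If $K_t$ has a single $h$-side (e.g.\ if its height is $\le h$), then $\varepsilon\defeq\varepsilon_0$ already works, so from now on assume $K_t$ has at least two $h$-sides.

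For every ordered pair $(S_1,S_2)$ of distinct $h$-sides of $K_t$ let $\delta(S_1,S_2)>0$ be as provided by \cref{thm:cut_off_sides}, and put $\delta_*\defeq\min_{S_1\ne S_2}\delta(S_1,S_2)>0$, the minimum being over a finite set. By the local growth property of $(K_s)$, applied with parameters $\delta_*/2$ and $T\defeq t+1$, there is $\delta'>0$ and a crosscut $C$ of $H_t$ of length at most $\delta_*/2$ that separates $K_{t+\delta'}\setminus K_t$ from $\infty$. Set $\varepsilon\defeq\min(\varepsilon_0,\delta')$; I claim $\beta[t,t+\varepsilon]$ is on one $h$-side of $K_t$, which is the lemma. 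Indeed, suppose not. By the choice of $\varepsilon_0$, every point of $\beta[t,t+\varepsilon]\cap H_t$ lies in $\mathcal{S}_h\setminus K_t$, hence on some $h$-side of $K_t$; since these are not all the same one, there are $p,q\in\beta[t,t+\varepsilon]\cap H_t$ lying on distinct $h$-sides $S_1\ni p$, $S_2\ni q$. Now $p,q\in\beta[t,t+\varepsilon]\subseteq\fill(\beta[0,t+\varepsilon])=K_{t+\varepsilon}$ while $p,q\notin K_t$, so $p,q\in K_{t+\varepsilon}\setminus K_t\subseteq K_{t+\delta'}\setminus K_t$, and therefore $C$ separates both $p$ and $q$ from $\infty$ in $H_t$. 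Thus $C$ is a crosscut of $\HH\setminus K_t$ with $\diam C\le\delta_*/2<\delta(S_1,S_2)$ that separates a point of $S_1$ and a point of $S_2$ from $\infty$, contradicting \cref{thm:cut_off_sides}.

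Given \cref{thm:cut_off_sides}, the core of the argument is very short; the work sits in the two preliminary facts, and I expect the genuinely delicate one — the main obstacle — to be the finiteness of the number of $h$-sides of $K_t$, which is where the standing assumptions on $\beta$ (finitely many large excursions) are really used. The only subtle point in the core argument itself is keeping the inequalities strict, which is why the local growth property is invoked with parameter $\delta_*/2$ rather than $\delta_*$; the conceptual content is the observation that the trace points $p,q$ themselves — lying in the newly grown hull $K_{t+\varepsilon}\setminus K_t$ and hence enclosed by the small crosscut supplied by the local growth property — can be fed directly into \cref{thm:cut_off_sides}.
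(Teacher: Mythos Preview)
Your core argument is clean, but preliminary fact 2 is false, and the proof collapses without it. A compact $\HH$-hull $K_t$ can have infinitely many $h$-sides even when it is generated by a single simple excursion. Take $\beta$ to be the graph of a continuous $f\colon[0,2]\to[0,\infty)$ with $f(0)=f(2)=0$ that rises above $h$, then oscillates with local maxima at heights $h+1/n$ and local minima at heights $h-1/n$ accumulating at some interior point. The region under the graph is $K_t$, and each valley below height $h$ gives a pocket of $\mathcal S_h\setminus K_t$; consecutive pockets are separated in $\mathcal S_{h'}\setminus K_t$ whenever $h<h'<h+1/(n+1)$, so they lie on distinct $h$-sides. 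Your sentence ``a continuous path performs only finitely many oscillations of a given amplitude'' is correct for a \emph{fixed} amplitude, i.e.\ it shows $\mathcal S_{h'}\setminus K_t$ has finitely many components for each fixed $h'>h$. But the definition of $h$-side quantifies over \emph{all} $h'>h$, and as $h'\searrow h$ the number of separating barriers can blow up. Consequently your minimum $\delta_*=\min_{S_1\ne S_2}\delta(S_1,S_2)$ need not be positive, and the contradiction does not close.

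The paper sidesteps this by never bounding the total number of $h$-sides. It first picks a sequence $t_n\searrow t$ with $\beta(t_n)\in H_t$ converging to some $z$, and then invokes \cref{thm:lc_finitely_many_ball_components} (a \emph{local} finiteness statement for components of $H_t$ in nested balls around $z$) together with pigeonhole to force a subsequence onto a single $h$-side $S_1$. If some other sequence $s_n\searrow t$ lands on different sides, the same trick produces a second specific side $S_2$. Now \cref{thm:cut_off_sides} is applied only to the pair $(S_1,S_2)$, giving a single $\delta>0$, and the local growth property furnishes the contradiction exactly as in your last paragraph. In short: you tried to get a uniform $\delta_*$ over all pairs of sides, which needs global finiteness; the paper needs $\delta$ only for one pair, which is free. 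Your preliminary fact 1 and the endgame are fine; to repair the argument you would have to replace fact 2 by something like \cref{thm:lc_finitely_many_ball_components}, at which point you have essentially reproduced the paper's proof.
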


\begin{proof}
By compactness we can find a sequence $t_n \searrow t$ such that $\beta(t_n) \in H_t$ converges to some $z \in \barH$ with $\Im z < h$. By \cref{thm:lc_finitely_many_ball_components} only finitely many components of $H_t \cap B(z,(h-\Im z)/2)$ are disconnected in $H_t \cap B(z,h-\Im z)$. Therefore (by the pigeonhole principle) we can pick a subsequence of $(t_n)$ (call it $(t_n)$ again) such that all $\beta(t_n)$ are connected in $H_t \cap B(z,h-\Im z) \subseteq \mathcal{S}_h \setminus K_t$. In particular, they are all on the same $h$-side of $K_t$; call that side $S_1$.

Suppose that there is another sequence $s_n \searrow t$ such that each $\beta(s_n) \in H_t$ is on a different $h$-side of $K_t$ than $S_1$. By the same argument as above, we can pick the sequence such that all $\beta(s_n)$ are on the same $h$-side of $K_t$; call that side $S_2$.

By construction $S_1 \neq S_2$. But then \cref{thm:cut_off_sides} gives us a contradiction to the local growth property.
\end{proof}

\begin{lemma}\label{thm:beta_continues_on_same_side}
Let $0 \le s < t$ and $h>0$. If $\beta[s,t]$ is on one $h$-side of $K_s$ and $\Im \beta(t) < h$, then $\beta[s,t+\varepsilon]$ is on one $h$-side of $K_s$ for some $\varepsilon > 0$.
\end{lemma}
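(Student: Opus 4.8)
The plan is to argue by contradiction and reduce to the one-sided statement \cref{thm:beta_continues_on_one_side}, applied at the \emph{later} time $t$; the contradiction will then come from the local growth property of $(K_r)$ via the crosscut estimates \cref{thm:cut_off_sides,thm:crosscut_intersect_new_points}, now taken with base hull $K_s$. Write $H_r \defeq \HH \setminus K_r$. Since $(K_r)$ is strictly increasing, $\beta[s,t] \not\subseteq K_s$, so $\beta[s,t] \cap H_s \neq \varnothing$, and by hypothesis this set lies on a single $h$-side $S$ of $K_s$. Applying \cref{thm:beta_continues_on_one_side} at time $t$ (this is where $\Im\beta(t) < h$ is used) provides $\varepsilon_1 > 0$ such that $\beta[t,t+\varepsilon_1] \cap H_t$ lies on one $h$-side $S'$ of $K_t$, hence (as $K_s \subseteq K_t$) on one $h$-side $\hat S$ of $K_s$. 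Splitting $\beta[t,t+\varepsilon] \cap H_s = \bigl(\beta[t,t+\varepsilon] \cap H_t\bigr) \cup \bigl(\beta[t,t+\varepsilon] \cap (K_t \setminus K_s)\bigr)$, it then remains to prove, for some $0 < \varepsilon \le \varepsilon_1$: \textbf{(a)} $\hat S = S$, and \textbf{(b)} every point of $\beta[t,t+\varepsilon] \cap (K_t \setminus K_s)$ also lies on side $S$.

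For \textbf{(a)} I would compare the two pieces of $\beta$ near the tip $\beta(t)$. Points of $\beta[s,t]$ approaching $\beta(t)$ from the left (along the excursion containing $t$ when $\beta(t) \in \HH$) that lie in $H_s$ are on side $S$, while points of $\beta[t,t+\varepsilon_1]$ approaching $\beta(t)$ from the right and lying in $H_t$ are on side $\hat S$; when $\beta(t) \in H_s$, a small half-ball around $\beta(t)$, which is contained in $\mathcal{S}_{h'} \setminus K_s$ for every $h' > h$ since $\Im\beta(t) < h$, forces $S = \hat S$. The remaining case $\beta(t) \in \partial K_s$ I would treat by contradiction, in the spirit of the proof of \cref{thm:beta_continues_on_one_side}: if $S \neq \hat S$, the local growth property near time $t$ yields crosscuts of vanishing diameter which — after invoking \cref{thm:crosscut_intersect_new_points} to account for their endpoints lying on $\partial K_r$ rather than on $\partial K_s$ — would separate a point of $S$ and a point of $\hat S$ from $\infty$, contradicting \cref{thm:cut_off_sides}.

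Part \textbf{(b)} is where I expect the real work. It says that although $\beta$ may re-enter the old hull $K_t$ after time $t$ — precisely the phenomenon that makes local connectedness insufficient, cf.\ \cref{fig:trace_not_continuous} — it can only do so ``on side $S$''. The case $\beta(r) \in \beta[s,t]$ for some $r \in {]t,t+\varepsilon]}$ is excluded immediately, since it would put $\beta(r)$ into $K_s$ or make it lie simultaneously on $S$ and on a different $h$-side. Otherwise $\beta(r)$ lies in a bounded component $U$ of $H_s \setminus \beta[s,t]$; the boundary $\partial U$ meets both $\partial K_s$ and $\beta[s,t]$, and since the part of $\beta[s,t]$ inside $H_s$ lies on side $S$, a small-ball argument shows $U$, and hence $\beta(r)$, lies on side $S$. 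The delicate point — and the main obstacle of the proof — is to make this last conclusion robust when the pocket $U$ extends above height $h$, so that $U \cap \mathcal{S}_{h'}$ need not be connected: here I would use Janiszewski's theorem and the Jordan curve theorem, as in the proofs of \cref{thm:lc_finitely_many_ball_components,thm:cut_off_sides}, to show that each component of $U$ meeting $\mathcal{S}_h$ still connects within $\mathcal{S}_{h'} \setminus K_s$ to side $S$, exploiting that the enclosing arc $\beta[s,t]$ is already entirely on side $S$ at level $h'$.
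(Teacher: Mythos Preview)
Your decomposition into (a) and (b) is reasonable, and the overall plan (apply \cref{thm:beta_continues_on_one_side} at $t$, then prove $\hat S=S$ by contradiction via local growth and the crosscut lemmas) is the paper's plan as well. Two points, however.

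Part \textbf{(b)} is a non-issue. Strict monotonicity of $(K_r)$ forbids $\beta(r)\in K_t^{\circ}$ for $r>t$ with $\beta(r)\in\HH$: otherwise continuity of the excursion gives $\beta([r-\eta,r+\eta])\subseteq K_t\subseteq K_{r-\eta}$, whence $K_{r'}=K_{r-\eta}$ on that interval. Hence any $\beta(r)\in K_t\cap H_s$ lies on $\partial K_t\cap\HH\subseteq\beta[0,t]$, and since $\beta[0,s]\subseteq K_s$ this forces $\beta(r)\in\beta[s,t]\cap H_s\subseteq S$. The bounded-component scenario you worry about never arises, and your proposed Jordan/Janiszewski argument is not needed. (Incidentally, the case $\beta(r)\in\beta[s,t]$ is not ``excluded''---it is the only case, and it already gives what you want.)

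Part \textbf{(a)} in the case $\beta(t)\in K_s\cup\RR$ has a genuine gap. Applying \cref{thm:crosscut_intersect_new_points} with $K=K_s$ and $K'=K_r$ only tells you that the small crosscut meets $K_r\setminus K_s$, which is no contradiction; and \cref{thm:cut_off_sides} cannot be invoked directly because the local-growth crosscuts live in $H_r$, not in $H_s$. The paper supplies the missing idea as follows: first check that the set of left limit points $\beta(t-)$ lies in $K_s\cup\RR\subseteq K_t$; fix some $\beta(t_N)\in\hat S\cap H_t$ and a path $\alpha\subset H_t$ from $\beta(t_N)$ to $\infty$, so that $\delta\defeq\dist(\alpha,\beta(t-))>0$; now choose $t'<t$ so close to $t$ that $\beta[t',t]$ stays within $\delta/2$ of $\beta(t-)$, and apply \cref{thm:crosscut_intersect_new_points} with base $K_{t'}$ (\emph{not} $K_s$). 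The local-growth crosscut $C$ is then forced to meet $K_{t''}\setminus K_{t'}$, hence to lie within $\delta/2$ of $\beta(t-)$ and in particular away from $\alpha$; so $C$ cannot separate $\beta(t_N)$ from $\infty$, and a short further argument (using that $C$ would have to contain points of both $S$ and $\hat S$ inside $\mathcal S_{h'}\setminus K_s$) shows it separates no $\beta(t_n)$ at all, contradicting local growth. The crucial point your sketch misses is that \cref{thm:crosscut_intersect_new_points} must be applied with a base hull taken \emph{just before} $t$, so that ``$C$ meets $K'\setminus K$'' becomes geometrically informative.
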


\begin{proof}
If $\beta(t) \in H_s$, then by the continuity of excursions there is nothing to show, so assume $\beta(t) \in K_s \cup \RR$. We claim that the set of limit points $\beta(t-)$ is contained in $K_s \cup \RR$. In case $\beta(t) \in \HH$, this is clear by the continuity of excursions. In case $\beta(t) \in \RR$ we have either $t$ as a finishing time of an excursion (in which case the claim is again clear by continuity) or that there are infinitely many excursions finishing shortly before $t$ in which case their diameters have to converge to $0$ by the assumption on $\beta$ which implies the claim.

Call $S_1$ the $h$-side of $K_s$ containing $\beta[s,t]$. By \cref{thm:beta_continues_on_one_side}, $\beta[t,t+\varepsilon]$ is on one $h$-side of $K_t$ and hence also of $K_s$ for some $\varepsilon > 0$; call it $S_2$. Suppose $S_1 \neq S_2$. Then we can find $h' > h$ such that they are separated in $\mathcal{S}_{h'} \setminus K_s$.

Pick a sequence $t_n \searrow t$ such that $\beta(t_n) \in H_t$. As just observed, we have $\beta(t_n) \in S_2$. Pick any $t_N$ and find a path $\alpha$ in $H_t$ connecting $\beta(t_N)$ to a neighbourhood of $\infty$. We have seen that the set of limit points $\beta(t-)$ is contained in $K_t$, so $\delta \defeq \dist(\alpha, \beta(t-)) > 0$. Find $t' < t$ such that $\dist(\beta(t''),\beta(t-)) < \delta/2$ for all $t'' \in {[t',t[}$. 

Since $\beta[s,t]$ is on one $h$-side of $K_s$, it follows from Janiszewski's theorem that $\beta[t',t]$ is on one $h$-side of $K_{t'}$. Recall that we have chosen all $\beta(t_n)$ to be on one different $h$-side of $K_{t'}$. Applying \cref{thm:crosscut_intersect_new_points} to $K_{t'}$ and by the local growth property there exists some $t'' \in {[t',t[}$ and some crosscut $C$ in $H_{t''}$ with $\diam C < \delta/2 \wedge (h'-h)$ that separates $\beta(t_n)$ from $\infty$ for sufficiently large $n$ and intersects $K_{t''} \setminus K_{t'}$.

The choice of $\delta$ implies $\dist(C,\alpha) > 0$. Therefore $C$ does not separate $\beta(t_N)$ from $\infty$. We claim that $C$ does not separate $\beta(t_n)$ from $\infty$ for any $n$, producing a contradiction.

We have picked $t_n$ such that all $\beta(t_n)$ are connected in $\mathcal{S}_{h''} \setminus K_t$ for any $h'' > h$. If $C$ separates $\beta(t_n)$ from $\infty$, $C$ needs to contain some point in the same $h$-side of $K_t$ as $\beta(t_n)$, and that side is contained in $S_2$. This means that $C$ needs to contain points from both $S_1$ and $S_2$. Since all points of $C$ are less than $h'-h$ away from the set $\beta(t-)$, $C$ contains a connected set in $\mathcal{S}_{h'} \setminus K_s$. But this is impossible since $S_1$ and $S_2$ are separated in $\mathcal{S}_{h'} \setminus K_s$.
\end{proof}

\begin{corollary}\label{thm:side_not_crossed}
Let $0 \le s < t$ and $h>0$. If all excursions of $\beta$ that occur within the time interval $[s,t]$ have smaller diameter than $h$, then $\beta[s,t]$ lies on one $h$-side of $K_s$.
\end{corollary}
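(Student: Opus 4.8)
The plan is to run a connectedness-in-time argument: use \cref{thm:beta_continues_on_one_side} to get the one-sidedness started at time $s$, use \cref{thm:beta_continues_on_same_side} to propagate it forward in time, and close up limit times with a short topological estimate.

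First I would record the uniform consequence of the hypothesis: every excursion occurring within $[s,t]$ touches $\RR$ and has diameter $<h$, so $\Im\beta(u)<h$ for every $u\in[s,t]$ with $\beta(u)\in\HH$, and trivially $\Im\beta(u)=0<h$ when $\beta(u)\in\RR$. Now set
\[ J \defeq \{\, u \in [s,t] : \beta[s,u] \text{ lies on one $h$-side of } K_s \,\}. \]
Because $\beta[s,u']\subseteq\beta[s,u]$ whenever $s\le u'\le u$, the set $J$ is an interval containing $s$; write $u^*\defeq\sup J$. Strict monotonicity of $(K_u)$ forces $\beta[s,u]\cap(\HH\setminus K_s)\neq\varnothing$ for every $u>s$ (otherwise $K_u=K_s$), and combined with \cref{thm:beta_continues_on_one_side} applied at time $s$ (valid since $\Im\beta(s)<h$) this gives $u^*>s$; in particular $\beta[s,u^*)$ meets $\HH\setminus K_s$.

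The main step is $u^*\in J$. Since $\beta[s,u]$ lies on one $h$-side of $K_s$ for every $u<u^*$ and these sides are nested, there is a single $h$-side $S_1$ of $K_s$ containing $\beta[s,u^*)\cap(\HH\setminus K_s)$. A point $z\in\beta[s,u^*]\cap(\HH\setminus K_s)$ either already lies in some $\beta[s,u]$ with $u<u^*$, hence in $S_1$, or else is a limit point of $\beta$ at $u^*$ and therefore a limit of points of $\beta[s,u^*)\subseteq S_1$; in the latter case $0<\Im z\le h$ because all those approximating points have imaginary part $<h$. Then for every $h'>h$ one can choose $\rho>0$ small enough that $B(z,\rho)$ is a connected subset of $\mathcal{S}_{h'}\setminus K_s$ containing $z$ together with some point of $S_1$, so $z$ lies on the $h$-side $S_1$ as well. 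Hence $\beta[s,u^*]$ lies on one $h$-side of $K_s$, i.e. $u^*\in J$.

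Finally, suppose for contradiction that $u^*<t$. Then $\Im\beta(u^*)<h$, and since $s<u^*$ and $\beta[s,u^*]$ lies on one $h$-side of $K_s$, \cref{thm:beta_continues_on_same_side} produces $\varepsilon>0$ with $\beta[s,u^*+\varepsilon]$ on one $h$-side of $K_s$, so $\min(u^*+\varepsilon,t)\in J$, contradicting $u^*=\sup J$. Therefore $u^*=t$, and $t\in J$ by the previous paragraph, which is the assertion. I expect the limit-time step ($u^*\in J$) to be the only delicate point: one must rule out that a point appearing exactly at time $u^*$ sits on a fresh $h$-side, and this is precisely where the diameter bound — forcing imaginary parts at most $h$, hence connectivity through arbitrarily thin strips via small balls — is essential.
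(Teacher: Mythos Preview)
Your argument is correct and follows exactly the paper's approach: define the supremum of times for which one-sidedness holds, use \cref{thm:beta_continues_on_one_side} to get it strictly past $s$, and \cref{thm:beta_continues_on_same_side} to push it to $t$. You have simply spelled out the closedness step $u^*\in J$ that the paper leaves implicit; note that in fact $\Im z<h$ strictly (since $z=\beta(u^*)\in\HH$ forces $u^*$ to lie in the open interval of an excursion occurring within $[s,t]$), so the boundary case $\Im z=h$ you worry about never arises.
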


\begin{proof}
Let
\[ \bar t \defeq \sup\{ t' \ge s \mid \beta[s,t'] \text{ is on one $h$-side of } K_s \} . \]
By \cref{thm:beta_continues_on_one_side}, we have $\bar t > s$, and by \cref{thm:beta_continues_on_same_side}, we have $\bar t \ge t$.
\end{proof}

Now the proof of \cref{thm:beta_continuous} follows.
\begin{proof}[Proof of \cref{thm:beta_continuous}]
First we show left-continuity. Let $t \ge 0$. If some excursion is ongoing or finishes at $t$, then there is nothing to show. Therefore assume that there are infinitely many excursions of $\beta$ finishing shortly before $t$.

Recall that $I \defeq K_t \cap \RR$ is an interval. Hence for any $x \in I$, there exists some past excursion $\tilde\beta$ such that $\fill(\tilde\beta)$ has small distance to $x$. Let $h>0$ be smaller than the height of $\tilde\beta$. From \cref{thm:side_not_crossed} and the assumption that only finitely many excursions are larger than $h$, it follows that when $\varepsilon > 0$ is small enough, $\beta[t-\varepsilon,t]$ will lie on one $h$-side of $K_{t-\varepsilon}$ and hence also of $\fill(\tilde\beta)$. Since this holds for all $x \in I$, it implies that $\beta(t-)$ is a Cauchy sequence.

Now let $x$ be any right limit point of $\beta$. If $x \neq \beta(t-)$, then as above we can find some past excursion between $x$ and $\beta(t-)$, contradicting \cref{thm:beta_continues_on_same_side}.
\end{proof}

\section{Proof of \ref{it:limit} in \cref{th:trace_characterisation}}
\label{se:trace_approximation}

Since this part is about local convergence, we can restrict ourselves to a compact time interval, say $[0,1]$. Let $\gamma \in C([0,1];\barH)$ be a trace. The strategy is to insert a sequence of cut points into $\gamma$ at a countable dense subset of $[0,1]$. This will produce a simple trace that approximates $\gamma$.

For $\gamma \in C([0,1];\barH)$ that satisfies the local growth property, we denote by $\hat g_t\colon \HH \setminus \fill(\gamma[0,t]) \to \HH$ the conformal map with $g_t(\gamma(t+)) = 0$ and $g_t(z) = z+O(1)$ near $\infty$, and $\hat f_t \defeq \hat g_t^{-1}$. In this section, we write $\gamma_t(s) \defeq \hat g_t(\gamma(s))$ for $t \le s \le 1$. By the property \ref{it:markov} of \cref{th:trace_characterisation}, this is again a continuous trace (generated by $\xi_t(s) \defeq \xi(s)-\xi(t)$, $s \in [t,1]$). Note the re-centring here which is a slight change of notation to the previous sections.

We first sketch how we construct a sequence $(\gamma^n)$ that converge to a simple path $\gamma^\infty$ such that $\|\gamma-\gamma^\infty\|_\infty < \varepsilon$. To keep the notation a bit simpler, we will care only about $\gamma^\infty$ being simple and not about boundary hittings. The latter are not a problem since we can remove them via
\[ \tilde\gamma^\infty \defeq \begin{cases}
\gamma^\infty(0)+i2\sqrt{t} & \text{for } t \le \varepsilon,\\
\gamma^\infty(t-\varepsilon)+i2\sqrt{\varepsilon} & \text{for } t \ge \varepsilon.
\end{cases} \]

Let $(t_n)$ be a sequence such that $\{ t_n \mid n \in \NN \}$ is a dense subset of $[0,1]$. Each $\gamma^n$ will insert a short simple path into $\gamma$ which serves as cut points. This path will be inserted in the time interval $[t_n,t_n+h_n]$ for some small $h_n > 0$. As a result, all times $t>t_n$ will shift to $t+h_n$. Therefore it is notationally convenient to introduce another (slight) reparametrisation.

Suppose a summable sequence of $h_n > 0$ have been defined, and write $\bar h \defeq \sum_{n \in \NN} h_n$. We ``stretch'' the interval $[0,1]$ to $[0,1+\bar h]$ by inserting an additional interval $[t_n,t_n+h_n]$ at time $t_n$ for each $n$. More precisely, we define $\varphi\colon [0,1] \to [0,1+\bar h]$,
\[ \varphi(t) \defeq t + \sum_{m \in \NN \text{ s.th. } t_m < t} h_m . \]
Let $s_n \defeq \varphi(t_n)$ and $I_n \defeq [s_n,s_n+h_n] \subseteq [0,1+\bar h]$. Then
\[ \begin{split}
\varphi^{-1}(s) \defeq{}& \sup\{ t \in [0,1] \mid \varphi(t) \le s \} \\
={}& \begin{cases}
s-\displaystyle\sum_{m \in \NN \text{ s.th. } s_m < s} h_m & \text{ if } s \notin \bigcup_n I_n ,\\
t_n & \text{ if } s \in I_n \text{ for some } n .
\end{cases}
\end{split} \]

We will construct $\gamma^n \in C([0,1+\bar h];\barH)$ inductively. Let $\gamma^0$ be $\gamma$ but ``halted'' in the intervals $I_n$, i.e. $\gamma^0(s) \defeq \gamma(\varphi^{-1}(s))$. Note that the hulls generated by $\gamma^0$ are not strictly increasing (they remain constant in the intervals $I_n$), but this will not worry us because we will construct $\gamma^\infty$ to be strictly increasing.

For $n \ge 1$, we let (see \cref{fi:cut_interval})
\[ \gamma^n(s) \defeq \begin{cases}
\gamma^{n-1}(s) & \text{for } s \le s_n,\\
\hat f^{n-1}_{s_n}(i2\sqrt{s-s_n}) & \text{for } s \in I_n,\\
\hat f^{n-1}_{s_n}(i2\sqrt{h_n}+\gamma^{n-1}_{s_n}(s)) & \text{for } s \ge s_n+h_n.
\end{cases} \]

\begin{figure}[h]
	\centering
	\includegraphics[width=0.8\textwidth]{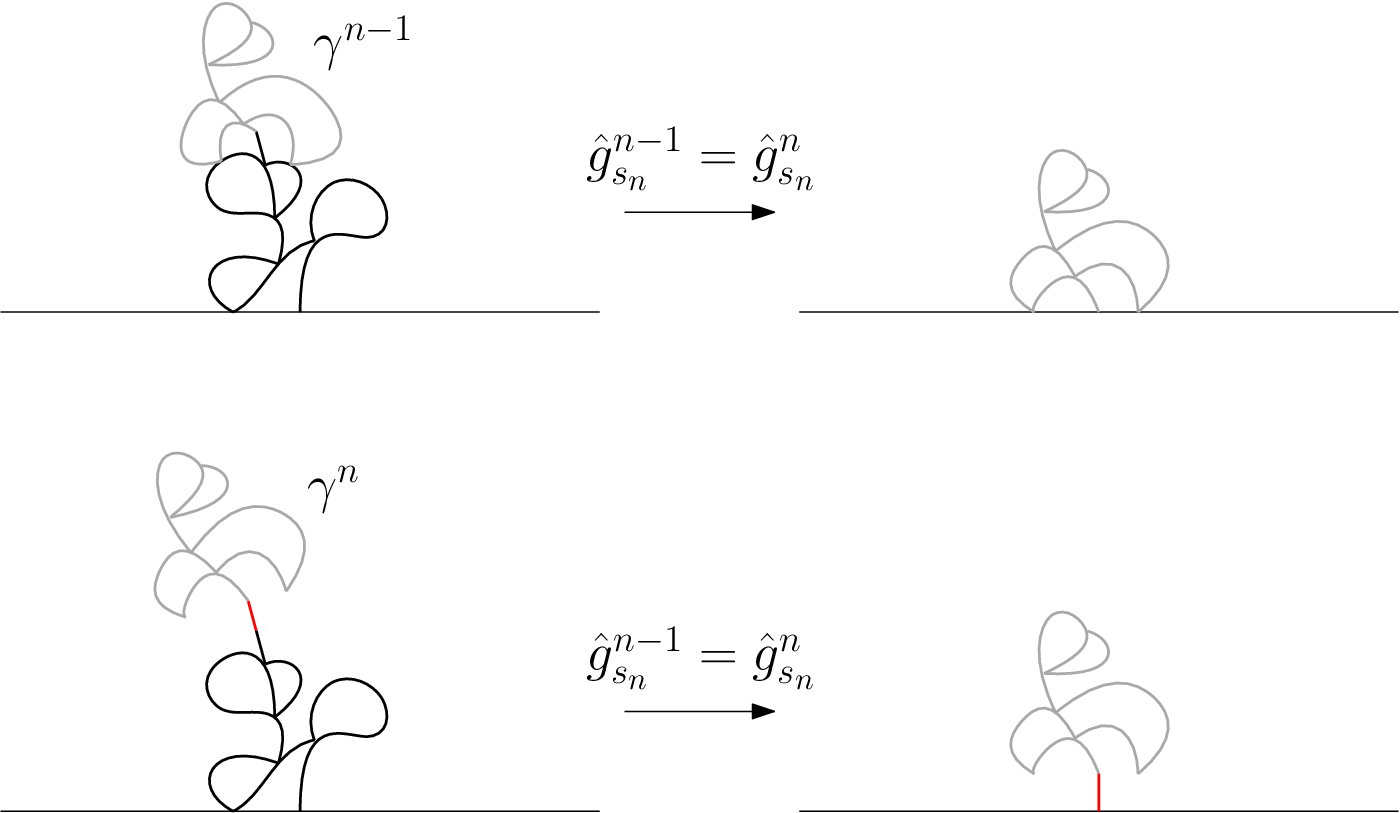}
	\caption{The construction of $\gamma^n$ from $\gamma^{n-1}$.}
	\label{fi:cut_interval}
\end{figure}

We claim that $\gamma^n$ satisfies the local growth property again. For $s \le s_n+h_n$ this is clear. For $s \ge s_n+h_n$ it follows from the local growth property of $\gamma^{n-1}_{s_n}$. (More precisely, for each crosscut $C$ in $\HH \setminus \fill(\gamma^{n-1}_{s_n}[s_n,s])$, we can build a crosscut in $\HH \setminus \fill(\gamma^{n}_{s_n}[s_n,s])$ by $\tilde C \defeq C+i2\sqrt{h_n}$ and closing $\tilde C$ from below in case $C$ terminates on $\RR$.)

Note that we have inserted a ``cut segment'' in the interval $I_n$ which separates $\gamma^n[s_n+h_n,1+\bar h]$ from $\gamma^n[0,s_n]$. We would like to make sure that these two parts remain separated for $m > n$, therefore we introduce the following notation.

For $m \ge n$, we let $d_{n,m} \defeq \dist(\gamma^m[0,s_n], \gamma^m[s_n+h_n,1+\bar h])$. We will show later that we can pick the sequences $(t_n)$, $(h_n)$ such that the following conditions are satisfied.
\begin{itemize}
\item $\|\gamma^n-\gamma^{n-1}\|_\infty < \varepsilon 2^{-n}$.
\item $d_{n,m} > d_{n,n}/2 > 0$ for all $m>n$.
\item $|\gamma^m(s)-\gamma^m(s')| > \frac{1}{2}|\gamma^n(s)-\gamma^n(s')|$ for $s,s' \in I_n$ and $m>n$.
\end{itemize}

These conditions will imply that $\gamma^n \to \gamma^\infty$ for some $\gamma^\infty \in C([0,1+\bar h];\HH)$ with $\|\gamma^0-\gamma^\infty\|_\infty < \varepsilon$. Moreover, we show that $\gamma^\infty$ is simple. Let $0 \le s < s'$. We need to show that $\gamma^\infty(s) \neq \gamma^\infty(s')$. There are two cases. In case there exists some $n$ such that $s < s_n < s_n+h_n < s'$, then $|\gamma^m(s)-\gamma^m(s')| \ge d_{n,m} > d_{n,n}/2$ for $m>n$ and hence $|\gamma^\infty(s)-\gamma^\infty(s')| \ge d_{n,n}/2 > 0$. In case no such $n$ exists, by the denseness of the sequence $(t_n)$, we must have $s,s' \in I_n$ for some $n$. In that case we have $|\gamma^m(s)-\gamma^m(s')| > \frac{1}{2}|\gamma^n(s)-\gamma^n(s')|$ for $m>n$ and hence $|\gamma^\infty(s)-\gamma^\infty(s')| > \frac{1}{2}|\gamma^n(s)-\gamma^n(s')| > 0$.

Now, since $\gamma^0$ is just a time-changed version of $\gamma$ (by at most $\bar h$), the uniform continuity of $\gamma$ implies that $\|\gamma-\gamma^\infty\|_\infty < \varepsilon+\phi(\bar h)$ for some increasing function $\phi$ with $\phi(0+)=0$.

This finishes the proof of \ref{it:limit} of \cref{th:trace_characterisation} since $\varepsilon$ and $\bar h$ can be chosen arbitrarily small.

\medskip

It remains to find suitable sequences $(t_n)$, $(h_n)$ that satisfy our desired conditions.

\begin{lemma}\label{thm:countable_dense_times}
There exists a countable dense subset $T \subseteq [0,1]$ such that for each $t \in T$ we have $\gamma(t) \notin \gamma([0,t[) \cup \RR$.
\end{lemma}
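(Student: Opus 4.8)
The plan is to show that for every nonempty open interval $J \subseteq [0,1]$ there is at least one time $t \in J$ with $\gamma(t) \notin \gamma([0,t[) \cup \RR$; picking one such $t$ from each interval with rational endpoints then yields the desired countable dense set $T$. So fix such a $J$ and suppose, for contradiction, that for every $t \in J$ either $\gamma(t) \in \RR$ or $\gamma(t) \in \gamma([0,t[)$.

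First I would dispose of the boundary alternative. By \cref{pr:zero_boundary_time} (applicable after reparametrising by half-plane capacity, which does not change the image set or the property of hitting $\RR$), the set $\{ t \in [0,1] \mid \gamma(t) \in \RR \}$ has Lebesgue measure zero; in fact one only needs that it has empty interior, so $J$ contains a time $t_0$ with $\gamma(t_0) \in \HH$. Actually it is cleaner to argue that the set $\{t \in J : \gamma(t) \in \HH\}$ is nonempty and open in $J$, hence contains a subinterval $J' \subseteq J$ on which $\gamma$ takes values in $\HH$; then by assumption $\gamma(t) \in \gamma([0,t[)$ for all $t \in J'$. This says that $\gamma|_{J'}$ is a continuous path every point of which was visited strictly earlier.

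The heart of the argument is then a local-growth/capacity obstruction: a trace cannot retrace a whole sub-arc. Concretely, for $t \in J'$ let $\tau(t) \defeq \inf\{ s \ge 0 : \gamma(s) = \gamma(t) \} < t$ be the first visiting time; then $\gamma(\tau(t)) = \gamma(t)$, so $\gamma(t)$ lies on the boundary of $K_{\tau(t)} = \fill(\gamma[0,\tau(t)])$ — more precisely $\gamma(t) \in \partial K_s$ for every $s \in [\tau(t), t]$, since a point hit at time $\tau(t)$ cannot be swallowed into the interior before being hit again. Now apply property \ref{it:markov}: the path $\gamma_{t_1}(s) = g_{t_1}(\gamma(s))$ is continuous for a fixed $t_1 \in J'$ with $t_1 < \inf J'$... the point I want to extract is that mapping out by $g_{t_1}$ sends the arc $\gamma(J')$, which lies entirely in $\partial K_{t_1} \cup (\text{later excursions})$, to a degenerate image. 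I would make this precise using the excursion decomposition from \cref{se:trace_excursions}: since $\gamma_{t_1}$ is a continuous trace consisting of countably many excursions in $\HH$ from $\RR$, and each point of $\gamma(J')$ is an earlier-visited point, the arc $\gamma_{t_1}(J')$ must be contained in the closure of finitely-or-countably many past excursions glued along $\RR$; combined with \cref{thm:beta_continues_on_same_side} (a trace bounces off but does not cross its past), an entire sub-arc cannot be re-traversed without violating strict monotonicity of the hulls $(K_t)$, which is assumed. This contradiction proves the claim.

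I expect the main obstacle to be making the "cannot retrace a whole arc" step fully rigorous when $\gamma$ is space-filling, where $\gamma(J')$ genuinely can be a fat set of earlier-visited points. The clean way around this is probably to invoke \cref{pr:zero_boundary_time} applied not to $\gamma$ itself but to each mapped-out trace $\gamma_{t_1}$ (which is legitimate by \cref{co:driver_restriction}): the set $\{ s \in [t_1,1] : \gamma_{t_1}(s) \in \RR \}$ has measure zero, but if $\gamma(s) \in \gamma([0,s[)$ for all $s$ in an interval then, choosing $t_1$ just below that interval, $\gamma_{t_1}(s)$ is forced onto $\RR$ for a positive-measure (indeed interval's worth) set of times $s$, a contradiction. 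I would also need the elementary observation that strict monotonicity of $(K_t)$ forces $\gamma(t) \notin K_{t^-}$ for a dense set of $t$ already, so the boundary-hitting reduction and the "first visiting time lies on $\partial K_s$ throughout $[\tau(t),t]$" claim mesh correctly; these are routine once the capacity-zero input is in place.
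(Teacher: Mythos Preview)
Your proposal takes a long detour around a two-line argument and, in doing so, creates both a circular dependency and a genuine gap.

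\textbf{The paper's proof.} Fix an interval $(a,b)$. Since $(K_t)$ is strictly increasing (with $\hcap$ strictly increasing), there is a point $w \in \gamma((a,b]) \cap H_a$: otherwise $\gamma[a,b] \subseteq K_a \cup \RR$, whence $K_b \cap \HH = K_a \cap \HH$. For any such $w$, its first visiting time $\sigma \defeq \inf\{s : \gamma(s)=w\}$ satisfies $\sigma \in (a,b]$ (because $w \notin \gamma[0,a]$), $\gamma(\sigma)=w \notin \RR$, and $\gamma(\sigma) \notin \gamma([0,\sigma[)$ by minimality. That is the whole proof; no measure-zero statement, no Markov property, no excursion analysis.

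\textbf{Circularity.} You invoke \cref{pr:zero_boundary_time}, but in this paper that proposition is proved in \cref{se:zero_boundary_time} \emph{using} the approximation property \ref{it:limit} of \cref{th:trace_characterisation}, whose proof in \cref{se:trace_approximation} is precisely what \cref{thm:countable_dense_times} is for. So within the paper's logical structure your argument is circular.

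\textbf{The gap.} Your ``clean way'' asserts that if $\gamma(s) \in \gamma([0,s[)$ for all $s$ in a subinterval $J'$, then choosing $t_1$ just below $J'$ forces $\gamma_{t_1}(s) \in \RR$ on $J'$. This does not follow: the earlier visit guaranteed by the hypothesis may occur at a time in $(t_1,s)$ rather than in $[0,t_1]$, so $\gamma(s)$ need not lie in $K_{t_1}$ and $\gamma_{t_1}(s)$ need not lie on $\RR$. The same issue undermines the earlier ``retracing'' paragraph, as you yourself flag.

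Ironically, the correct idea is already in your last sentence: ``strict monotonicity of $(K_t)$ forces $\gamma(t) \notin K_{t^-}$ for a dense set of $t$.'' That observation, combined with a first-hitting-time, \emph{is} the proof; everything preceding it should be deleted.
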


\begin{proof}
Since the family $(K_t)$ is strictly increasing, there must exist such $t$ in every interval of positive length. The claim follows immediately.
\end{proof}

\begin{lemma}\label{thm:small_distance_perturbation}
Let $f\colon \HH \to D \subseteq \CC$ be a conformal map and $A \subseteq \HH$ a bounded set with $\dist(A,\RR) > 0$. Then for any $\varepsilon > 0$ there exists $\delta > 0$ such that $|f(z_1+ih)-f(z_2+ih)| \ge (1-\varepsilon)|f(z_1)-f(z_2)|$ for all $z_1,z_2 \in A$ and $h \in [0,\delta]$.
\end{lemma}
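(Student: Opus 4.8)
The plan is to reduce the statement to a compactness argument using the uniform continuity of $f$ on a suitable compact set, and to handle the case $f(z_1)=f(z_2)$ (equivalently $z_1=z_2$) separately. Concretely, let $A\subseteq\HH$ be bounded with $\dist(A,\RR)=:a>0$, and let $R>0$ be such that $A\subseteq B(0,R)$. Set $K\defeq \overline{A}+i[0,a/2]$, a compact subset of $\HH$; then $f$ is uniformly continuous on $K$ (it is continuous on the larger open set $\HH$, hence uniformly continuous on the compact $K$). First I would dispense with the trivial part: for $z_1=z_2$ the claimed inequality reads $|f(z_1+ih)-f(z_2+ih)|\ge 0$, which is automatic, so from now on we assume $z_1\ne z_2$, i.e. $|f(z_1)-f(z_2)|>0$.

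The core step is to bound $|f(z_1)-f(z_2)|$ from below by a quantity comparable to $|z_1-z_2|$, uniformly over $z_1,z_2\in A$. By the Koebe distortion / $1/4$-theorem applied at each point $z\in K$ (using that $B(z,a/2)\subseteq\HH$, so $f$ restricted to that ball is univalent), one gets $|f'(z)|\ge c_1>0$ for all $z\in K$, where $c_1$ depends only on $a$ and on $\min_{\overline A}$-type quantities of $f'$ — more precisely, one fixes one point $z_0\in\overline A$ and uses Koebe to control $|f'|$ on $K$ from below in terms of $|f'(z_0)|$ and the hyperbolic geometry of $K$ inside $\HH$, all of which is bounded since $K$ is compact in $\HH$. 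Since $K$ is convex, for $z_1,z_2\in A$ we then have
\[
|f(z_1)-f(z_2)| = \Bigl|\int_0^1 f'\bigl((1-s)z_1+sz_2\bigr)(z_2-z_1)\,ds\Bigr| \ge c_1|z_1-z_2|,
\]
so $|f(z_1)-f(z_2)|\ge c_1|z_1-z_2|$. (One also has an upper bound $|f(z_1)-f(z_2)|\le c_2|z_1-z_2|$ by the same device with $\sup_K|f'|$, but we only strictly need the lower bound.)

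Finally I would combine this with uniform continuity. Given $\varepsilon>0$, uniform continuity of $f$ on $K$ yields $\delta>0$ with $\delta\le a/2$ such that $|f(w)-f(w')|<\varepsilon c_1$ whenever $w,w'\in K$ with $|w-w'|\le\delta$. For $z_1,z_2\in A$ and $h\in[0,\delta]$, write
\[
f(z_1+ih)-f(z_2+ih) = \bigl(f(z_1)-f(z_2)\bigr) + \bigl(f(z_1+ih)-f(z_1)\bigr) - \bigl(f(z_2+ih)-f(z_2)\bigr),
\]
and note $z_j, z_j+ih\in K$ with $|ih|\le\delta$. Hence, by the triangle inequality,
\[
|f(z_1+ih)-f(z_2+ih)| \ge |f(z_1)-f(z_2)| - 2\varepsilon c_1 \ge |f(z_1)-f(z_2)| - 2\varepsilon\,\frac{|f(z_1)-f(z_2)|}{|z_1-z_2|}.
\]
This is not yet in the desired form because of the factor $1/|z_1-z_2|$, so in fact I would instead choose $\delta$ so that $|f(w)-f(w')|\le \tfrac{\varepsilon}{2}\,c_1\,\operatorname{dist}(A,\RR)$-type smallness is not scale-invariant — the cleanest route is: pick $\delta$ with $2\cdot(\text{modulus of continuity of }f\text{ on }K\text{ at }\delta) \le \varepsilon c_1 |z_1 - z_2|$ uniformly, which we cannot do since $|z_1-z_2|$ is not bounded below. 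The honest fix, and the step I expect to be the main obstacle, is therefore to argue by a genuine compactness/contradiction argument on the set $\{(z_1,z_2)\in\overline A\times\overline A : z_1\ne z_2\}$ rather than by a direct modulus-of-continuity estimate: suppose the conclusion fails for some $\varepsilon>0$; then there are $z_1^{(k)},z_2^{(k)}\in A$ and $h_k\to0$ with $|f(z_1^{(k)}+ih_k)-f(z_2^{(k)}+ih_k)|<(1-\varepsilon)|f(z_1^{(k)})-f(z_2^{(k)})|$. Passing to a subsequence, $z_j^{(k)}\to z_j^\infty\in\overline A$. If $z_1^\infty\ne z_2^\infty$, continuity of $f$ on $K$ passes to the limit and gives $|f(z_1^\infty)-f(z_2^\infty)|\le(1-\varepsilon)|f(z_1^\infty)-f(z_2^\infty)|$, a contradiction. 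If $z_1^\infty=z_2^\infty=:z^\infty\in\overline A\subseteq\HH$, then for large $k$ all four points $z_j^{(k)}, z_j^{(k)}+ih_k$ lie in a small ball $B(z^\infty,\rho)\subseteq\HH$ on which $f$ is univalent; applying the Koebe distortion theorem on that ball, the ratio $|f(z_1^{(k)}+ih_k)-f(z_2^{(k)}+ih_k)|/|f(z_1^{(k)})-f(z_2^{(k)})|$ is controlled by the distortion constant, which tends to $1$ as $\rho\to0$, again contradicting the factor $(1-\varepsilon)$. Hence the lemma holds; the only real subtlety is this near-diagonal case, where scale invariance forces the use of the distortion theorem rather than plain uniform continuity.
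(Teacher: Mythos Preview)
Your final compactness/contradiction argument is correct and complete: the far-apart limit case is handled by continuity and injectivity of $f$, and the diagonal case by the Koebe distortion theorem, which indeed gives
\[
\frac{|f(z_1^{(k)}+ih_k)-f(z_2^{(k)}+ih_k)|}{|f(z_1^{(k)})-f(z_2^{(k)})|}\ \ge\ \frac{1-C\rho_k}{1+C\rho_k}\ \longrightarrow\ 1
\]
once all four points lie in $B(z^\infty,\rho_k)$ with $\rho_k\to 0$ (using that the two pairs have the same Euclidean distance $|z_1^{(k)}-z_2^{(k)}|$).

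This is a genuinely different route from the paper. The paper argues constructively: it fixes a threshold $d$, treats $|z_1-z_2|\ge d^2$ by uniform continuity of $f$ on a neighbourhood of $A$, and for $|z_1-z_2|<d^2$ uses Koebe distortion plus the Cauchy integral formula to get $|f'(w)-f'(z_1)|\le Cd\,|f'(z_1)|$ on $B(z_1,d^2)$, then integrates twice to obtain $|f(z_1+ih)-f(z_2+ih)|\ge\bigl(1-\tfrac{Cd}{1-Cd}\bigr)|f(z_1)-f(z_2)|$, and finally chooses $d$ small. Your argument is shorter and avoids the explicit calculus, at the cost of being non-constructive (no explicit $\delta$); the paper's version yields a quantitative $\delta$ in terms of the distortion constants. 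Both approaches rest on the same two ingredients: uniform continuity away from the diagonal, and Koebe distortion near it.

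One remark on your abandoned first attempt: the displayed inequality
\[
\Bigl|\int_0^1 f'\bigl((1-s)z_1+sz_2\bigr)(z_2-z_1)\,ds\Bigr|\ \ge\ c_1|z_1-z_2|
\]
does \emph{not} follow from the pointwise bound $|f'|\ge c_1$ on $K$; a complex-valued integrand can cancel. The conclusion $|f(z_1)-f(z_2)|\ge c_1|z_1-z_2|$ is in fact true for univalent $f$ on a neighbourhood of $\overline A$, but one needs Koebe's lower growth estimate rather than the line-integral argument. Since you discard this step anyway, it does not affect your final proof.
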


\begin{proof}
Let $d>0$ be a small number that we specify later. Since $f$ is uniformly continuous on a neighbourhood of $A$, there certainly exists $\delta > 0$ that work for all $z_1,z_2 \in A$ with $|z_1-z_2| \ge d^2$.

Suppose now that $|z_1-z_2| < d^2$. We can assume that $d < \frac{1}{2} \dist(A,\RR)$. The Koebe distortion theorem and Cauchy integral formula imply that there exists $C>0$ such that $|f'(w)-f'(z_1)| \le Cd|f'(z_1)|$ for all $w \in B(z_1,d^2)$. Hence
\[ \begin{split}
|(f(z_1)-f(z_2))-f'(z_1)(z_1-z_2)| &\le \int_{z_1}^{z_2} |f'(w)-f'(z_1)| \, |dw| \\
&\le Cd|f'(z_1)(z_1-z_2)| 
\end{split} \]
and consequently
\[ |f(z_1)-f(z_2)| \ge (1-Cd) |f'(z_1)(z_1-z_2)| . \]
Then, for $h \le \delta \defeq d^2$,
\[ \begin{split}
|(f(z_1+ih)-f(z_2+ih))-(f(z_1)-f(z_2))| &\le \int_{z_1}^{z_2} |f'(w+ih)-f'(w)| \, |dw| \\
&\le \int_{z_1}^{z_2} Cd|f'(w)| \, |dw| \\
&\le Cd|f'(z_1)(z_1-z_2)| \\
&\le \frac{Cd}{1-Cd}|f(z_1)-f(z_2)| 
\end{split} \]
and consequently
\[ |f(z_1+ih)-f(z_2+ih)| \ge \left( 1-\frac{Cd}{1-Cd} \right) |f(z_1)-f(z_2)| . \]

Choosing $d$ small enough such that $\frac{Cd}{1-Cd} \le \varepsilon$ implies the claim.
\end{proof}

We choose the sequence $(t_n)$ as in \cref{thm:countable_dense_times}. This implies that $\gamma^0(s_n) \notin \gamma^0([0,s]) \cup \RR$ for all $s < s_n$. Inductively, the same is true for all $\gamma^m$. Moreover, we see that $\gamma^m(I_n) \cap (\gamma^m([0,s]) \cup \RR) = \varnothing$ for all $s < s_n$ and $m \in \NN$.

Note we can choose the sequence $(h_n)$ inductively, where the choice of $h_n$ can depend on $\gamma^0$,...,$\gamma^{n-1}$. This is because although it looks like $\gamma^n$ depend also on future $h_m$ where $m > n$, they actually do not since we have set $\gamma^n$ constant on each $I_m$ for $m > n$.

Let $n \in \NN$. Since $\hat f^{n-1}_{s_n}$ is continuous in $\barH$, the difference $\|\gamma^n-\gamma^{n-1}\|_\infty$ becomes arbitrarily small when $h_n$ is small. The first condition is then immediately satisfied. For the second condition note that $d_{n,n} > 0$ holds automatically when $h_n > 0$. Then it remains to make sure that $d_{k,n} > d_{k,k}/2$ for all $k < n$. But for each $k < n$, we already have $d_{k,n-1} > d_{k,k}/2$ by induction hypothesis. By continuity of the distance function, this holds also for $d_{k,n}$ when $\|\gamma^n-\gamma^{n-1}\|_\infty$ is small enough.

For the third condition, consider any $k < n$. If $s_k < s_n$, there is nothing to do since $\gamma^n = \gamma^{n-1}$ on $[0,s_n]$. In case $s_k > s_n$, we can apply \cref{thm:small_distance_perturbation} with the map $\hat f^{n-1}_{s_n}$ and $A = \gamma^{n-1}_{s_n}(I_k)$ if we know that $\gamma^{n-1}_{s_n}(I_k) \cap \RR = \varnothing$. But this is equivalent to $\gamma^{n-1}(I_k) \cap (\gamma^{n-1}([0,s_n]) \cup \RR) = \varnothing$ which is true by our construction. Therefore, \cref{thm:small_distance_perturbation} implies that $h_n$ can be chosen small enough such that the third condition is preserved from $n-1$ to $n$.

\section{More on trace approximations}

In this section we are going to prove \cref{pr:zero_boundary_time} and \cref{thm:trace_to_driver_continuous}.

\subsection{Proof of \cref{pr:zero_boundary_time}}
\label{se:zero_boundary_time}

We first gather a few general facts.

For a compact set $A \subseteq \barH$ (not necessarily a hull), we can define $\hcap(A) \defeq \lim_{y \to \infty} y \ex[ \Im B^{iy}_{\tau_{\HH \setminus A}} ]$ where $B^{iy}$ denotes Brownian motion started at $iy$ and $\tau_{\HH \setminus A}$ denotes the exit time of $B^{iy}$ from $\HH \setminus A$.

If $A \subseteq B \subseteq \barH$ and $A$ is a compact $\HH$-hull with mapping-out function $g_A$, then $\hcap(B) = \hcap(A)+\hcap(g_A(B\setminus A))$. This can be easily shown from \cite[Proposition 3.41 (3.5)]{Law05} and the strong Markov property of Brownian motion. In particular, with \cite[Proposition 3.42]{Law05} we see that $\hcap(B\setminus A) \ge \hcap(B)-\hcap(A) = \hcap(g_A(B\setminus A))$.

\begin{lemma}\label{le:hcap_sum_bound}
Let $A_1 \subseteq A_2 \subseteq ... \subseteq A_n \subseteq \barH$ be compact $\HH$-hulls. Then
\[ \hcap(A_1 \cup (A_3 \setminus A_2) \cup (A_5 \setminus A_4) \cup ...) \ge \hcap(A_1)+\hcap(A_{2,3})+\hcap(A_{4,5})+... \]
where $A_{i,j} \defeq g_{A_i}(A_j \setminus A_i)$.
\end{lemma}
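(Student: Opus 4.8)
The plan is to induct on $n$, with base cases $n\in\{1,2\}$, peeling off the innermost hull $A_1$ via its mapping-out function and reducing to a configuration of $n-2$ hulls. Write $U\defeq A_1\cup(A_3\setminus A_2)\cup(A_5\setminus A_4)\cup\cdots$ for the set whose capacity appears on the left. For $n\le 2$ there are no shells $A_{2j+1}\setminus A_{2j}$ and no cross-capacities $A_{2j,2j+1}$, so both sides equal $\hcap(A_1)$ and there is nothing to prove.

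For the inductive step, since $A_1\subseteq U$ is a compact $\HH$-hull, the additivity recalled above gives $\hcap(U)=\hcap(A_1)+\hcap\big(g_{A_1}(U\setminus A_1)\big)$. Put $\hat A_j\defeq g_{A_1}(A_j\setminus A_1)$ for $2\le j\le n$. By the standard composition calculus for compact $\HH$-hulls — the maps satisfy $g_{A_j}=g_{\hat A_j}\circ g_{A_1}$, whence $g_{A_1}(A_j\setminus A_i)=\hat A_j\setminus\hat A_i$ and $g_{\hat A_i}(\hat A_j\setminus\hat A_i)=g_{A_i}(A_j\setminus A_i)=A_{i,j}$ for $i<j$ — the $\hat A_j$ form an increasing chain of compact $\HH$-hulls. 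Since $A_1\subseteq A_2$, the shells $A_{2j+1}\setminus A_{2j}$ are disjoint from $A_1$, so $U\setminus A_1=(A_3\setminus A_2)\cup(A_5\setminus A_4)\cup\cdots$ and hence $g_{A_1}(U\setminus A_1)=(\hat A_3\setminus\hat A_2)\cup(\hat A_5\setminus\hat A_4)\cup\cdots\eqdef V$.

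It remains to bound $\hcap(V)$ from below. As $V\cap\hat A_2=\varnothing$, the inequality $\hcap(B\setminus A)\ge\hcap(B)-\hcap(A)$ (valid for a hull $A\subseteq B$), applied with $A=\hat A_2$ and $B=\hat A_2\cup V$, gives $\hcap(V)\ge\hcap(\hat A_2\cup V)-\hcap(\hat A_2)$. The key observation is that $\hat A_2\cup V=\hat A_3\cup(\hat A_5\setminus\hat A_4)\cup(\hat A_7\setminus\hat A_6)\cup\cdots$, which is precisely the left-hand side of the lemma for the chain $\hat A_3\subseteq\hat A_4\subseteq\cdots\subseteq\hat A_n$ of $n-2$ hulls. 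The induction hypothesis therefore gives
\[ \hcap(\hat A_2\cup V)\ge\hcap(\hat A_3)+\hcap(\hat A_{4,5})+\hcap(\hat A_{6,7})+\cdots . \]
Subtracting $\hcap(\hat A_2)$ and using the additivity identity $\hcap(\hat A_3)-\hcap(\hat A_2)=\hcap\big(g_{\hat A_2}(\hat A_3\setminus\hat A_2)\big)=\hcap(\hat A_{2,3})$, we obtain $\hcap(V)\ge\sum_{j\ge 1}\hcap(\hat A_{2j,2j+1})=\sum_{j\ge 1}\hcap(A_{2j,2j+1})$, the last step being the identity $\hat A_{i,j}=A_{i,j}$ noted above. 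Combining this with $\hcap(U)=\hcap(A_1)+\hcap(V)$ closes the induction.

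Everything here is bookkeeping around the two capacity facts recalled just before the lemma. The only points requiring a little care are the standard hull identities (that $\hat A_j$ is again a compact $\HH$-hull, that the mapping-out functions compose, and that $\hat A_{i,j}=A_{i,j}$), which hold up to harmless prime-end considerations, and the even/odd indexing of the shells that makes $\hat A_2\cup V$ an instance of the lemma with two fewer hulls. I expect that last structural observation to be the only real content; no quantitative estimate is needed.
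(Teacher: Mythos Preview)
Your proof is correct and follows essentially the same approach as the paper: peel off $A_1$ via the additivity identity, use the inequality $\hcap(B\setminus A)\ge\hcap(B)-\hcap(A)$ to pass to a chain of $n-2$ hulls, and induct. The paper's write-up is terser (it bundles your inequality and the subsequent additivity step into the single line $\hcap(g_1(\cdots))\ge\hcap(A_{2,3}\cup g_2(\cdots))$ and then says ``proceed inductively''), but the content is the same.
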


\begin{proof}
By the above observations, we have
\[ \begin{split}
&\hcap(A_1 \cup (A_3 \setminus A_2) \cup (A_5 \setminus A_4) \cup ...) \\
&\quad = \hcap(A_1)+\hcap(g_1(A_3 \setminus A_2) \cup g_1(A_5 \setminus A_4) \cup ...)\\
&\quad \ge \hcap(A_1)+\hcap(A_{2,3} \cup g_2(A_5 \setminus A_4) \cup ...)
\end{split} \]
and proceed inductively.
\end{proof}

Now we perform the proof of \cref{pr:zero_boundary_time}. It suffices to consider a trace on a compact time interval, say $\gamma\colon [0,1] \to \barH$.  By \cref{th:trace_characterisation} we can find simple traces $\gamma^n$ such that $\gamma^n \to \gamma$ uniformly. By \cref{re:hcap_need} we can assume $\gamma^n$ being parametrised by half-plane capacity.

By the uniform convergence of $\gamma^n$, we can find for any $h > 0$ some $n$ such that $\gamma^{-1}(\RR) \subseteq (\gamma^n)^{-1}(\RR \times [0,h[)$. We would like to show that the latter set has small measure.

The set $(\gamma^n)^{-1}(\RR \times [0,h[)$ consists of a countable number of disjoint intervals $]s_i,t_i[$. Since $\gamma^n$ is simple and parametrised by half-plane capacity, we have $K^n_{t_i} \setminus K^n_{s_i} = \gamma^n(]s_i,t_i])$ and $2\abs{t_i-s_i} = \hcap(K^n_{t_i})-\hcap(K^n_{s_i}) = \hcap(g^n_{s_i}(K^n_{t_i} \setminus K^n_{s_i}))$.

By \cref{le:hcap_sum_bound}, we have for any $I \in \NN$ that
\[ \begin{split}
\sum_{i=1}^I \hcap(g^n_{s_i}(K^n_{t_i} \setminus K^n_{s_i})) 
&\le \hcap\left( \bigcup_{i=1}^I (K^n_{t_i} \setminus K^n_{s_i}) \right) \\
&=  \hcap\left( \bigcup_{i=1}^I \gamma^n(]s_i,t_i]) \right) \\
&\le ch 
\end{split} \]
where $c < \infty$ depends on $\diam\gamma \approx \diam\gamma^n$. Hence, denoting Lebesgue measure by $\abs{\cdot}$,
\[ \abs{\gamma^{-1}(\RR)} \le \abs{(\gamma^n)^{-1}(\RR \times [0,h[)} = \sum_{i \in \NN} \abs{t_i-s_i} \le ch . \]
Since $h > 0$ was arbitrary, this implies $\abs{\gamma^{-1}(\RR)} = 0$.

\subsection{Proof of \cref{thm:trace_to_driver_continuous}}
\label{se:trace_to_driver_continuous}

Since this part is about local convergence, we can restrict ourselves to a compact time interval, say $[0,1]$.

Let $\gamma^n \in C([0,1];\barH)$ be a sequence of chordal Loewner traces, and suppose that $\gamma^n \to \gamma$ uniformly. 
Note that such a sequence is equicontinuous, and denote their modulus of continuity by $\omega$, i.e. $\abs{\gamma^n(t)-\gamma^n(s)} \le \omega(\abs{t-s})$ for all $n$, and the same for $\gamma$. As usual, we denote the corresponding hulls by $K_t \defeq \fill(\gamma[0,t])$. Moreover, let $R \defeq \sup_t \diam \gamma_t < \infty$, where $\gamma_t(s) = g_t(\gamma(s))$ as before.

Given $\varepsilon > 0$, we would like to find $\delta > 0$ such that $\norm{\xi-\xi^n}$ is small whenever $\norm{\gamma-\gamma^n} < \delta$.

Let $h_\varepsilon > 0$ such that $\omega(h_\varepsilon) < \varepsilon$. Let $t \in [0,1]$. We follow the proof of \cite[Theorem 4.3]{LMR10} and estimate the difference via
\begin{equation} \begin{split}\label{eq:driver_diff}
\abs{\xi(t)-\xi^n(t)} \le{}& \abs{\xi(t)-g_t(\gamma(t+h))}+\abs{g_t(\gamma(t+h))-g^n_t(\gamma(t+h))}\\
&\quad +\abs{g^n_t(\gamma(t+h))-\xi^n(t)} 
\end{split} \end{equation}
with a suitable $h \in {]0,h_\varepsilon]}$ that we will choose below.

By the half-plane capacity parametrisation and \cite[Lemma 3.4]{JVL11}, we have
\[ \begin{split}
2h_\varepsilon = \hcap(\gamma_t[t,t+h_\varepsilon]) 
&\le c\diam(\gamma_t[t,t+h_\varepsilon])\operatorname{height}(\gamma_t[t,t+h_\varepsilon]) \\
&\le cR\operatorname{height}(\gamma_t[t,t+h_\varepsilon]) .
\end{split} \]
Therefore there exists some $h \in {]0,h_\varepsilon]}$ such that $\Im \gamma_t(t+h) \ge \frac{2h_\varepsilon}{cR}$. By \cite[Lemma 4.5]{LMR10}, it follows that $\dist(\gamma(t+h),K_t) \ge \frac{2h_\varepsilon}{c^2 R} \wedge \frac{4h_\varepsilon^2}{c^4 R^3} \eqdef d$.

By the uniform continuity of $\gamma$, we have $\diam(\gamma[t,t+h]) \le \omega(h) < \varepsilon$, and by \cite[Lemma 4.5]{LMR10} it follows that $\diam(\gamma_t[t,t+h]) \le c(\varepsilon \vee R^{1/2}\varepsilon^{1/2})$. In particular, we have
\[ \abs{\xi(t)-g_t(\gamma(t+h))} = \abs{\gamma_t(t)-\gamma_t(t+h)} \le c(\varepsilon \vee R^{1/2}\varepsilon^{1/2}) \]
which bounds the first difference in \eqref{eq:driver_diff}.

The third difference in \eqref{eq:driver_diff} can be bounded similarly. When we pick $\delta \le d/2$ so that $\delta < d-\delta < \dist(\gamma(t+h),K^n_t)$, then again by \cite[Lemma 4.5]{LMR10}
\[ \abs{g^n_t(\gamma(t+h))-g^n_t(\gamma^n(t+h))} \le c(\delta \vee R^{1/2}\delta^{1/2}) \]
and
\[ \abs{g^n_t(\gamma^n(t+h))-\xi^n(t)} \le c(\varepsilon \vee R^{1/2}\varepsilon^{1/2}) . \]

To bound the second difference in \eqref{eq:driver_diff}, we use \cite[Lemma 4.8]{LMR10}. Let $B \defeq \fill(K_t \cup K^n_t)$.

Pick $\delta \le \frac{d}{2c_0} \wedge \frac{d^2}{4c_0^2 R}$, i.e. we have $\norm{\gamma-\gamma^n} \le \frac{d}{2c_0} \wedge \frac{d^2}{4c_0^2 R}$, where $c_0$ denotes the constant in \cite[Lemma 4.5]{LMR10}.

We now estimate the hyperbolic distance from $\gamma(t+h)$ to $\infty$ in $(\CC \setminus B)^*$ where $^*$ denotes the reflection through $\RR$. By \cite[Lemma 4.4]{LMR10}, we have $\diam g_t(\partial K_t) \le 4R$. By the choice of $\delta$ and \cite[Lemma 4.5]{LMR10} it follows that $g_t(\partial B) \subseteq [a,a+4R+d] \times [0,\frac{d}{2}]$ for some $a \in \RR$.

Denoting by $g_t^*$ the Schwarz reflection of $g_t$ through $\RR$, we have that
\[ \begin{split}
\rho_{(\CC \setminus B)^*}(\gamma(t+h),\infty) 
&= \rho_{g_t^*((\CC \setminus B)^*)}(\gamma_t(t+h),\infty) \\
&\le \rho_{\CC \setminus ([a,a+4R+d] \times [-\frac{d}{2},\frac{d}{2}])}(\gamma_t(t+h),\infty) . \end{split} \]
Recalling that $\Im \gamma_t(t+h) \ge d$, an explicit computation (see the lemma below) shows that the hyperbolic distance is at most $\rho \le \sinh^{-1}(\frac{8R+4d}{d/2}) \le \log(17+\frac{32R}{d})$.

By \cite[Lemma 4.8]{LMR10}, we then have
\[ \begin{split}
&\abs{g_t(\gamma(t+h))-g^n_t(\gamma(t+h))} \\
&\quad \le \abs{g_t(\gamma(t+h))-g_B(\gamma(t+h))}+\abs{g_B(\gamma(t+h))-g^n_t(\gamma(t+h))} \\
&\quad \le 2cR^{1/2} \rho \delta^{1/2} \\
&\quad \le cd\log(17+\frac{32R}{d}) .
\end{split} \]
Since $d$ can be chosen as small as we want, this bounds the second difference in \eqref{eq:driver_diff} and finishes the proof of \cref{thm:trace_to_driver_continuous}.

\begin{lemma}
For $z=x+iy$ with $y > b \ge 0$ we have
\[ \rho_{\hatC \setminus ([-a,a]\times[-b,b])}(z,\infty) \le \sinh^{-1}\left(\frac{4(a+b)}{y-b}\right) . \]
\end{lemma}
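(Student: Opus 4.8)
The plan is to reduce the general case to a configuration on which the hyperbolic distance can be computed (or cleanly bounded) explicitly, and then to use monotonicity of the hyperbolic metric under inclusion of domains. Write $S \defeq [-a,a]\times[-b,b]$. First I would use the conformal invariance of the hyperbolic metric together with the scaling $z \mapsto z/(a+b)$; this normalises the rectangle to $S' = [-a',a']\times[-b',b']$ with $a'+b' = 1$, so in particular $a',b' \le 1$. After scaling, $\infty$ is unchanged and the point $z = x+iy$ becomes $z' = z/(a+b)$, whose imaginary part is $y' = y/(a+b) > b'$. So it suffices to prove the bound $\rho_{\hatC\setminus S'}(z',\infty) \le \sinh^{-1}(4/(y'-b'))$ under the normalisation $a'+b' = 1$; the factor $4(a+b)/(y-b)$ in the statement is exactly $4/(y'-b')$.

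The key monotonicity step: since $S' \subseteq \overline{B(0,1)}$ (as $|x'+iy'| \le \sqrt{a'^2+b'^2} \le a'+b' = 1$ for a corner, and every point of $S'$ has modulus at most that), we have $\hatC \setminus S' \supseteq \hatC \setminus \overline{\DD}$, hence $\rho_{\hatC\setminus S'}(z',\infty) \le \rho_{\hatC\setminus\overline\DD}(z',\infty)$. The domain $\hatC \setminus \overline\DD$ is conformally the unit disc via $w \mapsto 1/w$, which sends $\infty$ to $0$ and $z'$ to $1/z'$, a point of modulus $<1$; so $\rho_{\hatC\setminus\overline\DD}(z',\infty) = \rho_{\DD}(0, 1/z') = \log\frac{1+1/|z'|}{1-1/|z'|} = 2\tanh^{-1}(1/|z'|)$. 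Since $|z'| \ge y' > b'$, we get $\rho_{\hatC\setminus S'}(z',\infty) \le 2\tanh^{-1}(1/y')$. It then remains the elementary real-variable inequality $2\tanh^{-1}(1/y') \le \sinh^{-1}(4/(y'-b'))$ for $y' > b' \ge 0$, $b' \le 1$; this I would check by noting $\sinh^{-1}(4/(y'-b')) \ge \sinh^{-1}(4/y')$ is false in general, so instead I would bound more carefully, using $y' - b' \le y'$ only when helpful and otherwise comparing both sides directly — see below.

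Since the clean bound $2\tanh^{-1}(1/|z'|)$ is only available when $y' > 1$ (so that $1/y' < 1$ and the formula makes sense with room to spare), the genuinely delicate regime is $b' < y' \le 1$, where the point $z'$ may sit close to or even inside the unit disc relative to the corners of $S'$. Here I would not enclose $S'$ in $\overline\DD$ but rather in a thin horizontal slab or a disc centred away from $z'$: concretely, $S'$ lies in the closed disc of radius $1+b'$ centred at $0$ is too crude; better is to observe $S' \subseteq \{|\Im w| \le b'\}$ and use the explicit hyperbolic metric of the complement of a horizontal segment, or to note that the complement of $S'$ contains the complement of the horizontal segment $[-a',a']\times\{0\} = [-(1-b'),1-b']$ dilated — actually the cleanest route is the previous lemma-style computation: the complement of a horizontal slit maps to an annulus-free domain whose distance to $\infty$ from a point at height $y'-b'$ above the slit is controlled by $\sinh^{-1}$ of (half-length of slit)/(height). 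I expect this case analysis — patching together the "$z'$ far" estimate via $\overline\DD$ and the "$z'$ near" estimate via a slit or slab — to be the main obstacle, since the single expression $\sinh^{-1}(4(a+b)/(y-b))$ has to dominate both regimes simultaneously, which is why the constant $4$ (rather than $1$ or $2$) appears. Once the two regimes are handled and the constant is verified to absorb the worst case, the proof concludes by undoing the scaling.
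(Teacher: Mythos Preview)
Your monotonicity idea does not work, even in the ``far'' regime. Take $a'=1$, $b'=0$ (so $S'=[-1,1]$) and $z'=i(1+\eta)$ with $\eta>0$ small. Enclosing $S'$ in $\overline\DD$ gives
\[
\rho_{\hatC\setminus\overline\DD}(z',\infty)=2\tanh^{-1}\!\bigl(1/(1+\eta)\bigr)\xrightarrow[\eta\to 0]{}\infty,
\]
whereas the target bound is $\sinh^{-1}(4/(1+\eta))\le\sinh^{-1}4<2.1$. So the disc comparison is vastly too weak whenever the rectangle is thin and $z'$ sits just above its top: replacing a slit by a disc blows up the hyperbolic metric near the bulge. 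No choice of enclosing disc (or any set $E\supseteq S'$ simple enough to compute with) will avoid this, because the constraint $z'\notin E$ forces $E$ to be squeezed against the top edge of $S'$, and the resulting metric blows up like the disc one does.

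The ``near'' case is even worse: you propose enclosing $S'$ in a slit or a slab, but a rectangle of positive height is contained in neither, and the slab complement does not even contain $\infty$ as an interior point, so the hyperbolic distance to $\infty$ there is undefined. The sentence ``distance to $\infty$ from a point at height $y'-b'$ above the slit'' is exactly the right intuition, but getting there requires a genuine conformal map, not an inclusion.

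This is precisely what the paper does. It takes the Schwarz--Christoffel map $f\colon\HH\to\HH\setminus([-a,a]\times[0,b])$, shows from the explicit form of $f'$ that $\Im f(w)\le b+\Im w$, and then Schwarz-reflects $f^{-1}$ to a conformal map $g\colon\hatC\setminus S\to\hatC\setminus I$ with $I\subseteq\RR$ an interval of length at most $8(a+b)$. The key estimate $\Im g(z)\ge y-b$ (which is what your ``height $y'-b'$ above the slit'' is reaching for) then feeds into the explicit Joukowski formula for $\rho_{\hatC\setminus I}(\cdot,\infty)$ and yields the $\sinh^{-1}$ bound directly. The step you are missing is this tracking of $\Im z$ under the uniformising map; monotonicity alone cannot substitute for it.
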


\begin{proof}
Let $f\colon \HH \to \HH \setminus ([-a,a]\times[0,b])$ be the hydrodynamically normalised conformal map. By the Schwarz-Christoffel formula, we have
\[ f'(z) = (z-a_1)^{-1/2}(z-a_2)^{1/2}(z-a_3)^{1/2}(z-a_4)^{-1/2} \]
where $a_1,...,a_4$ are the preimages of the points $-a,-a+ib,a+ib,a$. (The multiplicative constant in the formula is determined by $\lim_{z \to \infty} f'(z) = 1$.)

It follows that $\Im f(z) \le b+\Im z$ since $\abs{f'(iy)} \le 1$ and $\Im f'(z) \le 0$ for $\Re z \ge 0$ and $\Im f'(z) \ge 0$ for $\Re z \le 0$.

Call $g$ the Schwarz reflection of $f^{-1}$, so that
\[ \rho_{\hatC \setminus ([-a,a]\times[-b,b])}(z,\infty) = \rho_{\hatC \setminus I}(g(z),\infty) \]
where $I = g(\partial([-a,a]\times[-b,b])) \subseteq \RR$. By \cite[Proposition 4.4]{LMR10}, we have $\diam I \le 4\diam([-a,a]\times[-b,b]) \le 8(a+b)$.

By an explicit computation with the map $h\colon \DD \to \hatC \setminus I$, $h(z) = c(z+\frac{1}{z})$, we get
\[ \rho_{\hatC \setminus I}(g(z),\infty) \le \sinh^{-1}\left(\frac{2c}{\Im g(z)}\right) \le \sinh^{-1}\left(\frac{4(a+b)}{(\Im z)-b}\right) . \]
\end{proof}

\bibliographystyle{alpha}

\end{document}